\documentclass[acmsmall]{acmart}

\citestyle{acmauthoryear}

\usepackage[acmart]{lindquist}
\usepackage{lu-abbrev}

\usepackage{algpseudocode,algorithm,algorithmicx}
\usepackage{booktabs}
\usepackage{microtype}

\usepackage{pgfplots}
\pgfplotsset{compat=1.17}


\iftrue
\usetikzlibrary{external}
\tikzexternalize[prefix=graphics/]
\fi

\usepackage[capitalize]{cleveref}
\crefname{equation}{}{}
\crefformat{footnote}{#2\footnotemark[#1]#3}
\makeatletter
\newcommand{\crefTheoremName}{\cref@theorem@name}
\newcommand{\crefTheoremNamePlural}{\cref@theorem@name@plural}
\newcommand{\crefLemmaName}{\cref@lemma@name}
\makeatother

\newtheorem{theorem}{Theorem}
\newtheorem{lemma}[theorem]{Lemma}

\usepackage{ifthen}
\DeclarePairedDelimiter\btsize{\langle}{\rangle}
\newcommand{\rbt}[3][]{\ifthenelse{\equal{#2}{}}{\mathcal{#3}}{\mathcal{#3}^{\btsize{#2}#1}}}
\newcommand{\matname}[1]{\texttt{#1}}

\newcommand{\seq}[2]{S^{#1}_{#2}}
\newcommand{\conseq}[2]{CS^{#1}_{#2}}
\newcommand{\seqeq}[2]{#1\cong#2}
\newcommand{\seqeqmod}[3]{#1\equiv#2\pmod{#3}}
\newcommand{\seqneqmod}[3]{#1\not\equiv#2\pmod{#3}}

\title[Generalizing Random Butterfly Transforms to Arbitrary Matrix Sizes]{Generalizing Random Butterfly Transforms\texorpdfstring{\\}{ }to Arbitrary Matrix Sizes}
\author{Neil Lindquist}
\orcid{0000-0001-9404-3121}
\affiliation{%
	\institution{The University of Tennessee}
	\streetaddress{Suite 203 Claxton, 1122 Volunteer Blvd}
	\city{Knoxville}
	\state{TN}
	\postcode{37996}
	\country{USA}
}
\email{nlindqu1@icl.utk.edu}
\author{Piotr Luszczek}
\orcid{0000-0002-0089-6965}
\affiliation{%
	\institution{The University of Tennessee}
	\streetaddress{Suite 203 Claxton, 1122 Volunteer Blvd}
	\city{Knoxville}
	\state{TN}
	\postcode{37996}
	\country{USA}
}
\email{luszczek@icl.utk.edu}
\author{Jack Dongarra}
\orcid{0000-0003-3247-1782}
\affiliation{%
	\institution{The University of Tennessee}
	\streetaddress{Suite 203 Claxton, 1122 Volunteer Blvd}
	\city{Knoxville}
	\state{TN}
	\postcode{37996}
	\country{USA}
}
\email{dongarra@icl.utk.edu}

\acmJournal{TOMS}

\keywords{Gaussian Elimination, Randomization}

\begin{CCSXML}
<ccs2012>
<concept>
<concept_id>10002950.10003705.10003707</concept_id>
<concept_desc>Mathematics of computing~Solvers</concept_desc>
<concept_significance>500</concept_significance>
</concept>
<concept>
<concept_id>10002950.10003705.10011686</concept_id>
<concept_desc>Mathematics of computing~Mathematical software performance</concept_desc>
<concept_significance>500</concept_significance>
</concept>
<concept>
<concept_id>10002950.10003714.10003715.10003719</concept_id>
<concept_desc>Mathematics of computing~Computations on matrices</concept_desc>
<concept_significance>500</concept_significance>
</concept>
<concept>
<concept_id>10010147.10010919.10010172</concept_id>
<concept_desc>Computing methodologies~Distributed algorithms</concept_desc>
<concept_significance>300</concept_significance>
</concept>
</ccs2012>
\end{CCSXML}

\ccsdesc[500]{Mathematics of computing~Solvers}
\ccsdesc[500]{Mathematics of computing~Mathematical software performance}
\ccsdesc[500]{Mathematics of computing~Computations on matrices}
\ccsdesc[300]{Computing methodologies~Distributed algorithms}

\begin{document}

\begin{abstract}
Parker and L\^e introduced \acfp{rbt} as a preprocessing technique to replace pivoting in dense LU factorization.
Unfortunately, their \acs{fft}-like recursive structure restricts the
dimensions of the matrix.
Furthermore, on multi-node systems, efficient management of the communication overheads
restricts the matrix's distribution even more.
To remove these limitations, we have generalized the \acs{rbt} to arbitrary matrix sizes by truncating the dimensions of each layer in the transform.
We expanded Parker's theoretical analysis to generalized \acs{rbt},
specifically that in exact arithmetic, \acl{genp} will succeed with probability~1 after transforming a matrix with full-depth \acsp{rbt}.
Furthermore, we experimentally show that these generalized transforms improve performance over Parker's formulation by up to \SI{62}{\%} while retaining the ability to replace pivoting.
This generalized \acs{rbt} is available in the \acs{slate} numerical
software library.
\end{abstract}
\maketitle
\acbarrier

\section{Introduction}

\Ac{gepp} is commonly used to solve large, dense systems of linear
equations.
For numerical stability, the computational elimination is interleaved
with the row exchanges to maximize the magnitude of the diagonal elements,
which subsequently scale the outer-product updates.
However, most pivoting procedures incur significant
communication overheads due to the growing performance
imbalance between computational capacity and communication bandwidth
available on modern supercomputers.
Furthermore, pivoting adds data dependencies that limit the available
parallelism, not only asymptotically but also in most practical settings.
Unfortunately, \ac{genp} is numerically unstable for most linear systems
originating in scientific applications.
An alternative to pivoting is preprocessing the matrix with a random
transform before factoring it with \ac{genp};
such randomization provides numerical stability by preventing large element growth.
A popular choice for this preprocessing is Parker and L\^e's \ac{rbt}~\cite{parkerHowEliminatePivoting1995}.

\Ac{rbt} preprocessing is similar to combined left- and
right-preconditioning, except the ``preconditioned'' matrix is
explicitly computed so that it can be factored.
In other words, for a pair of transforms, \(\rbt{}{U}^T\) and \(\rbt{}{V}\), the linear system \(Ax=b\) is rewritten as
\begin{equation}
\label{eq:precond}
	(\rbt{}{U}^T\! A \rbt{}{V})(\rbt{}{V}^{-1} x) = (\rbt{}{U}^T b).
\end{equation}
(While we use \(\rbt{}{U}^T\) here,
 \(\rbt{}{U}^*\) can also be used in the
complex case.)
From \cref{eq:precond}, the algorithm falls out naturally as
follows:
\begin{enumerate}
\item Transform the system 
      as \( \widetilde{A}\equiv\rbt{}{U}^T A \rbt{}{V} \)
      and \(\widetilde{b} \equiv \rbt{}{U}^T b\).
\item Solve \(\widetilde{A}\widetilde{x}=\widetilde{b}\) without pivoting.
\item Transform the solution by \(\rbt{}{V}\) to undo the implicit \(\rbt{}{V}^{-1}\).
\end{enumerate}
This approach requires fast transforms to avoid adding a significant overhead to the factorization.
The \ac{rbt} has an \acs{fft}-like structure which limits the cost of
transforming \(A \in \mathbb{R}^{n\times n}\) to \(\Oh{n^2\log_2(n)}\).
Furthermore, the transform is often truncated to reduce the logarithmic term to a small constant.


Unfortunately, \acp{rbt} are limited to matrix sizes that are multiples of \(2^d\) (where \(d\) is the depth) due to the \acs{fft}-like structure.
Furthermore, efficient application in distributed settings requires the matrix distribution to be aligned to the butterfly structure~\cite{baboulinEfficientDistributedRandomized2014,lindquistReplacingPivotingDistributed2020}.
Thus, applications must pad their matrices with the identity matrix to fit the transform, requiring more operations in the factorization and a matrix allocation that depends on the transform's depth.
However, the preprocessing of linear systems does not depend on the
exact numerical properties of the chosen \ac{rbt} but simply on its
ability to scramble the elements from all over the original matrix using weighted sums.
Thus, we propose a generalized structure for the \ac{rbt} that allows arbitrarily sized \acp{rbt} and efficient distributed execution.
Our Generalized \ac{rbt} takes the previously used
formulation of \ac{rbt} (herein called a \emph{Parker \ac{rbt}}) and truncates \emph{each layer separately} to the desired size.

\section{Previous Work}

The \ac{rbt} approach was first proposed in 1995 through a pair of tech reports by Parker~\cite{parkerRandomButterflyTransformations1995,parkerRandomizingButterflyTransformation1995} and a third tech report by Parker and L\^e~\cite{parkerHowEliminatePivoting1995}.
They outline the approach and provide basic theoretical and experimental analysis.
Their theoretical analysis shows that with probability 1, \ac{genp} will have only non-zero pivots in exact arithmetic.
They then tested 11 matrices with sizes from \(n=32\) to \(n=512\).
The \ac{rbt} solver provided similar solutions to LINPACK for most of the matrices; however, its solutions for ill-conditioned problems were less accurate than LINPACK.
Due to a lack of optimizations, their implementation performed worse than LINPACK.

Then between 2008 and 2016, Baboulin et al.\ refined the \ac{rbt} idea into a performant solver over numerous papers.
The primary thrust of their work targeted many-core and heterogeneous systems for both non-symmetric and symmetric-indefinite problems~\cite{baboulinIssuesDenseLinear2008,tomovDenseLinearAlgebra2010,beckerReducingAmountPivoting2012,baboulinAcceleratingLinearSystem2013,baboulinRandomizedLUbasedSolver2015,baboulinDenseSymmetricIndefinite2016}.
This line of work included several improvements, including
\begin{itemize}
	\item truncating the transform's recursion depth to two,
	\item designing efficient \ac{rbt} kernels, and
	\item using iterative refinement.
\end{itemize}
Overall, their work shows excellent speedups over partial pivoting and reliably solved the test matrices, although the accuracy tests were limited to problems of size \(n=1024\).
Additionally, they explored the use of the \ac{rbt} strategy for a distributed, symmetric-indefinite solver~\cite{baboulinEfficientDistributedRandomized2014}, for sparse factorization~\cite{baboulinUsingRandomButterfly2015}, and for incomplete sparse factorizations~\cite{baboulinUsingRandomButterfly2015a,jamalHybridCPUGPU2016}.

Building on the work of Baboulin et al., Donfack et al.\ compared
different replacements for partial pivoting, (including the \ac{rbt}),
in a single-node, multicore setting~\cite{donfackSurveyRecentDevelopments2015}.
In that work, the speedup of the \ac{rbt} solver compared to partial pivoting was much lower than in the GPU-based studies, likely because CPUs handle irregular work better than GPUs.
But, more notably, they tested the accuracy of \ac{rbt} with \(n=\num{30000}\) and demonstrated that a depth-2 \ac{rbt} fails to sufficiently transform all problems (specifically the \matname{ris} and \matname{orthog} matrices).
We also previously extended this style of \ac{rbt} to a distributed, heterogeneous LU-factorization~\cite{lindquistReplacingPivotingDistributed2020}.
Our implementation demonstrated large speedups over partial pivoting and accurate results for a few matrices of size \(n=\num{100000}\).
However, like Donfack et al.\, we found that the \ac{rbt} is ineffective on large versions of the \matname{orthog} matrix.
(See \cref{sec:exp:orr} for analysis of using the \ac{rbt} on \matname{ris} and \matname{orthog}.)

An interesting, recent work by Shen et al.\ combines \acp{rbt} with a modified version of the \ac{aca} algorithm~\cite{shenTaskparallelTiledDirect2022}.
The \ac{aca} algorithm takes advantage of low-rank properties in the
matrix's off-diagonal submatrices to factor a dense matrix in
\(\Oh{n\log(n)}\) time.
However, intuition suggests these optimizations would work against each
other since the \ac{rbt} tries to equalize the rank between the on- and
off-diagonal parts of the matrix while \ac{aca} relies on the
off-diagonal submatrices being low-rank;
this is likely the cause of their experimental loss of accuracy compared to the regular \ac{rbt} solver.
Unfortunately, their experiments are limited to matrices that can be factored without pivoting, and they did not test the performance of a non-\ac{rbt}, non-pivoted factorization (with or without \ac{aca}), making it unclear whether the randomization actually benefited the accuracy.

Additionally, there has been recent work on the theoretical properties of \acp{rbt}~\cite{trogdonSpectralNumericalProperties2019,peca-medlinGrowthFactorsRandom2023}.
Those results show that preprocessing the identity matrix with \acp{rbt} results in a quadratic median growth for \ac{genp} (suggesting that \acp{rbt} will not significantly increase the growth for a given matrix).
Furthermore, they experimentally show that, for \(n=256\), the majority of \acp{rbt} reduce the growth of \ac{genp} applied to Wilkinson's matrix~\cite{wilkinsonAlgebraicEigenvalueProblem1965}  from \(10^{77}\) to less than \(10^6\).
Unfortunately, they considered butterflies based on rotation matrices instead of the butterflies based on block-Hadamard matrices used by Baboulin et al.

Butterfly matrices are not the only transform that has been proposed for randomized preprocessing.
First, Parker and Pierce proposed the ``Randomizing Fast Fourier Transform'', which combines an \ac{fft} with a random, diagonal scaling, as an alternative to the \ac{rbt}~\cite{parkerRandomizingFFTAlternative1995}.
This is related to later sub-sampling transforms which randomly sample the columns of a randomizing fast Fourier transform (although, later transforms are usually formulated as the transpose).
Such transforms include the fast Johnson-Lindenstrauss transform~\cite{ailonApproximateNearestNeighbors2006}, the \acl{srft}~\cite{woolfeFastRandomizedAlgorithm2008}, and the subsampled random Hadamard transform~\cite{nguyenFastEfficientAlgorithm2009}.
They proved that applying a randomizing fast Fourier transform to either side of a matrix resulted in a strongly non-singular matrix with probability 1.
Pan et al.\ have also explored numerous transforms, including Gaussian and circulant matrices~\cite{panRandomMultipliersNumerically2015,panNumericallySafeGaussian2017}.
They proved that a matrix preconditioned with one or more Gaussian matrices can be factored with a limited growth factor using \ac{genp} with high probability.
Furthermore, experimental results with various preconditioning matrices show numerical errors of \(10^{-10}\) to \(10^{-15}\) with \ac{genp}, one step of iterative refinement, and matrices of size \(n=256\) to \(n=4096\).

Finally, there exist other types of randomized preprocessing.
A related idea to the random multipliers is to instead add a random matrix, which can be corrected with the Woodbury formula~\cite{panRandomizedPreprocessingPivoting2013}.
For low-rank problems, ``sketching'' techniques have recently become
popular~\cite{martinssonRandomizedNumericalLinear2020} and are advancing
rapidly~\cite[Sec.~2, 7, and A]{murrayRandomizedNumericalLinear2023}.
These techniques multiply the system matrix by a random, rectangular matrix to reduce the problem to a smaller dimension, while retaining the large singular values.

Besides randomized preprocessing, several other strategies
address the cost of pivoting in dense Gaussian elimination.
Most notably, \ac{getp} selects a whole block of pivots at a time to reduce the number of synchronizations needed~\cite{grigoriCALUCommunicationOptimal2011}.
Threshold pivoting also modifies the pivot selection process to allow smaller pivots that require less data movement~\cite{lindquistThresholdPivotingDense2022}.
Another strategy is dynamic pivoting, which avoids swapping rows in memory when pivoting and instead rearranges the matrix distribution~\cite{geistLUFactorizationAlgorithms1988}.
Finally, \ac{beam} replaces pivoting with additive perturbations to
diagonal blocks during the
factorization~\cite{lindquistUsingAdditiveModifications2023}; this
reduces element growth in the Schur-complements but requires a correction with either
iterative refinement or the Woodbury formula.


\section{Generalized Random Butterfly Transforms}
\label{sec:rbt}
\Aclp{rbt} are constructed by alternating random, diagonal matrices with orthogonal matrices.
Our generalized formulation uses orthogonal butterfly matrices of the form
\begin{equation}
	\label{eq:O-matrix-definition}
	O^{\btsize{m}}_{\mu,\nu} = \begin{bmatrix} \tfrac{1}{\sqrt{2}}I_\mu & 0 & \tfrac{1}{\sqrt{2}}I_\mu \\ 0 & I_\nu & 0 \\ \tfrac{1}{\sqrt{2}}I_\mu & 0 & \tfrac{-1}{\sqrt{2}}I_\mu \end{bmatrix}
\end{equation}
where \(2\mu + \nu = m\) is the matrix dimension
and \(I_{\mu}\) is an identity matrix of size \(\mu\).
We call butterflies where \(\nu = 0\) or \(m=1\) Parker butterflies since they match Parker's original structure~\cite{parkerRandomButterflyTransformations1995}.
Interestingly, \cref{fig:butterfly-shape} shows that these generalized butterflies still have a data dependency diagram that looks like a butterfly; the generalization merely adds the butterfly's body.
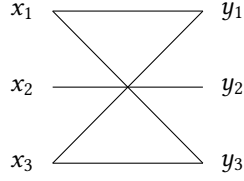
\begin{figure}
	\centering
	\begin{tikzpicture}
	\draw (0, -1) -- (2, -1);
	\draw (0, -1) -- (2,  1);
	\draw (0,  0) -- (2,  0);
	\draw (0,  1) -- (2, -1);
	\draw (0,  1) -- (2,  1);

	\node at (-0.4, -1) {\(x_3\)};
	\node at (-0.4,  0) {\(x_2\)};
	\node at (-0.4,  1) {\(x_1\)};
	\node at ( 2.4, -1) {\(y_3\)};
	\node at ( 2.4,  0) {\(y_2\)};
	\node at ( 2.4,  1) {\(y_1\)};
\end{tikzpicture}
	\caption{Data dependencies for multiplying the vector \([x_1^T, x_2^T, x_3^T]^T\) by an orthogonal butterfly matrix to produce \([y_1^T, y_2^T, y_3^T]^T\!\).}
	\Description{Data dependencies diagram with \(x_1\) going to \(y_1\) and \(y_3\), \(x_2\) going to \(y_2\), and \(x_3\) going to \(y_1\) and \(y_3\).  This looks like a pictogram of a butterfly.}
	\label{fig:butterfly-shape}
\end{figure}
These orthogonal matrices can be composed with random diagonal matrices into a depth-\(d\) \ac{rbt}:
\begin{equation}
	\label{eq:rbt-depth-d}
	\rbt{n}{U} = \mathrm{diag}\bigl(O^{\btsize{\ell_{d,1}}}_{\mu_{d,1},\nu_{d,1}}, O^{\btsize{\ell_{d,2}}}_{\mu_{d,2},\nu_{d,2}}, \dots\bigr) R_d \cdots O^{\btsize{n}}_{\mu_{1,1},\nu_{1,1}}\!R_1
\end{equation}
where the sizes are chosen such that pairs of butterflies from one layer align with a single butterfly from the layer to the right.
Specifically, if \(\ell_{i,j}\) is the size of the \(j\)th butterfly (top to bottom) in the \(i\)th layer (right to left), then
\begin{equation}
	\label{eq:butterfly-alignment}
	\ell_{i+1,2j} + \ell_{i+1,2j+1} = \ell_{i,j}, \qquad
	\ell_{i+1,2j} = \mu_{ij}+\nu_{ij}, \qquad \text{and} \qquad
	\ell_{i+1,2j+1} = \mu_{ij}.
\end{equation}
\Cref{fig:rbt-structure-diagram} demonstrates this structure.
\begin{figure*}
	\centering
	\begin{tikzpicture}[scale=3.2,
					nlborder/.style={thin,black},
					nlnonzeros/.style={thick,black},
					nldivider/.style={dotted,black}]

	\node at (1.25, -0.5) {\(R_1\)};
	\node at (1.10, -0.5) {\(\times\)};

	\draw[nlborder] (0,0) -- (0,-1) -- (1,-1) -- (1,0) -- cycle;
		\draw[nldivider] (0.0,-0.4) -- (1.0,-0.4);
		\draw[nldivider] (0.0,-0.6) -- (1.0,-0.6);
		\draw[nldivider] (0.4,-0.0) -- (0.4,-1.0);
		\draw[nldivider] (0.6,-0.0) -- (0.6,-1.0);

		\draw[nlnonzeros] (0.0,-0.0) -- (1.0,-1.0);
		\draw[nlnonzeros] (0.0,-0.6) -- (0.4,-1.0);
		\draw[nlnonzeros] (0.6,-0.0) -- (1.0,-0.4);

	\node at (-0.10, -0.5) {\(\times\)};
	\node at (-0.25, -0.5) {\(R_2\)};
	\node at (-0.40, -0.5) {\(\times\)};

	\begin{scope}[xshift=0.1cm]
		\draw[nlborder] (-1.6,0) -- (-1.6,-1) -- (-0.6,-1) -- (-0.6,0) -- cycle;
			\draw[nldivider] (-1.6,-0.6) -- (-0.6,-0.6);
			\draw[nldivider] (-1.0,-0.0) -- (-1.0,-1.0);
			\draw[nldivider] (-1.6,-0.3) -- (-1.0,-0.3);
			\draw[nldivider] (-1.3,-0.0) -- (-1.3,-0.6);
			\draw[nldivider] (-1.0,-0.7) -- (-0.6,-0.7);
			\draw[nldivider] (-1.0,-0.9) -- (-0.6,-0.9);
			\draw[nldivider] (-0.9,-0.6) -- (-0.9,-1.0);
			\draw[nldivider] (-0.7,-0.6) -- (-0.7,-1.0);

			\draw[nlnonzeros] (-1.6,-0.0) -- (-0.6,-1.0);
			\draw[nlnonzeros] (-1.3,-0.0) -- (-1.0,-0.3);
			\draw[nlnonzeros] (-1.6,-0.3) -- (-1.3,-0.6);
			\draw[nlnonzeros] (-1.0,-0.9) -- (-0.9,-1.0);
			\draw[nlnonzeros] (-0.7,-0.6) -- (-0.6,-0.7);
	\end{scope}

	\node at (-1.60, -0.5) {\(\times\)};
	\node at (-1.75, -0.5) {\(R_3\)};
	\node at (-1.90, -0.5) {\(\times\)};

	\begin{scope}[xshift=-3cm]
		\draw[nlborder] (0,0) -- (0,-1) -- (1,-1) -- (1,0) -- cycle;
			\draw[nldivider] (0.0,-0.3) -- (0.6,-0.3);
			\draw[nldivider] (0.0,-0.6) -- (1.0,-0.6);
			\draw[nldivider] (0.6,-0.9) -- (1.0,-0.9);
			\draw[nldivider] (0.3,-0.0) -- (0.3,-0.6);
			\draw[nldivider] (0.6,-0.0) -- (0.6,-1.0);
			\draw[nldivider] (0.9,-0.6) -- (0.9,-1.0);
			\draw[nldivider] (0.00,-0.15) -- (0.30,-0.15);
			\draw[nldivider] (0.15,-0.00) -- (0.15,-0.30);
			\draw[nldivider] (0.30,-0.45) -- (0.60,-0.45);
			\draw[nldivider] (0.45,-0.30) -- (0.45,-0.60);
			\draw[nldivider] (0.60,-0.75) -- (0.90,-0.75);
			\draw[nldivider] (0.75,-0.60) -- (0.75,-0.90);

			\draw[nlnonzeros] (0.0,-0.0) -- (1.0,-1.0);
			\draw[nlnonzeros] (0.15,-0.00) -- (0.30,-0.15);
			\draw[nlnonzeros] (0.00,-0.15) -- (0.15,-0.30);
			\draw[nlnonzeros] (0.45,-0.30) -- (0.60,-0.45);
			\draw[nlnonzeros] (0.30,-0.45) -- (0.45,-0.60);
			\draw[nlnonzeros] (0.75,-0.60) -- (0.90,-0.75);
			\draw[nlnonzeros] (0.60,-0.75) -- (0.75,-0.90);
	\end{scope}
\end{tikzpicture}
	\caption{The structure of a depth-3, semi-Parker \ac{rbt}.}
	\label{fig:rbt-structure-diagram}
	\Description{Sparsity diagrams for the three orthogonal matrices in a depth-3 \ac{rbt}.  The last butterfly on each diagonal is truncated, but submatrices of one layer align nicely with those of the next.}
\end{figure*}
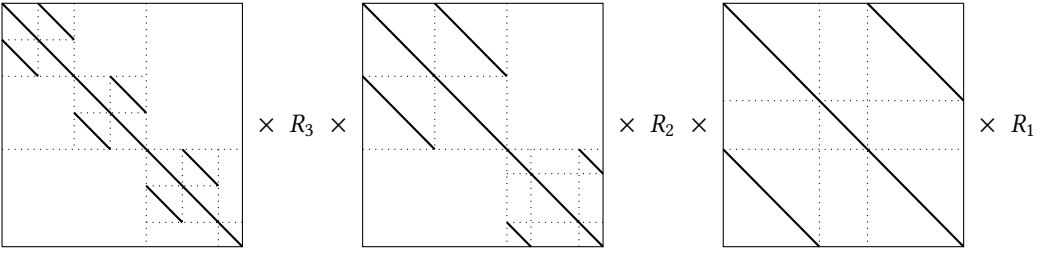
Note how the nice recursive structure of the \ac{rbt} is retained while
disconnecting the dimensions of the \ac{rbt} layers from the global
problem size.
This removes the need to pad the matrix to a particular size.
Furthermore, in the distributed case the sizes of the transforms can be aligned to the matrix distribution,
which avoids complicated element-wise communication patterns~\cite{baboulinEfficientDistributedRandomized2014,lindquistReplacingPivotingDistributed2020}.

Previous formulations of the \ac{rbt} (which we call \emph{Parker \acp{rbt}}) are equivalent to \cref{eq:rbt-depth-d} when all the individual butterflies are Parker butterflies.
Because Parker \acp{rbt} have a track record of providing accurate results,
we ideally want to deviate from Parker \acp{rbt} by the minimum amount
needed to obtain efficient execution.
So, we define a \emph{semi-Parker \ac{rbt}} to be an \ac{rbt} such that all the butterflies in \cref{eq:rbt-depth-d} are Parker butterflies except for the last block in each block-diagonal matrix.
In other words, a semi-Parker \ac{rbt} is like a Parker \ac{rbt}, except each of the orthogonal layers is truncated to match the size of the system matrix.
(However, a semi-Parker \ac{rbt} is \emph{not} equivalent to a Parker \ac{rbt} that has been truncated after multiplying all the constituent matrices.)
We focus on semi-Parker \acp{rbt}, but non-semi-Parker \acp{rbt} would be useful for a matrix distributed with non-uniform tile sizes.

Like previous computational-focused works, we formulate our butterflies using random diagonal matrices and (truncated) block-Hadamard matrices.
However, the theoretical analysis of Trogdon and Peca-Medlin uses butterflies based on block-rotation matrices.
The idea of this generalization also applies to those matrices: simply use a rotation of \ang{0} for the truncated portion.
In other words, the individual butterfly matrices would take the form
\[
	\begin{bmatrix}
		C_\mu & 0 & S_\mu \\
		0 & I_\nu & 0 \\
		S_\mu & 0 & -C_\mu \\
	\end{bmatrix}
\]
with \(C,S\) being diagonal matrices such that \(C_\mu^2 + S_\mu^2 = I_\mu\).
Then, these butterfly matrices would be composed as in \cref{eq:rbt-depth-d}, except with \(R_i=I\).
Such \acp{rbt} have the benefit of being orthogonal.


Generalized \acp{rbt} are implemented similarly to Parker \acp{rbt},
except the 1-element rows of the transforms simply require scaling
by the random factor.
For a 2-sided kernel, the loops must be split into four behaviors (each side can have either 1 or 2 elements).
So, similar to previous formulations of the \ac{rbt},
a depth-\(d\) transform can be applied to a vector in \(2dn\) \si{\flop} and to both sides of a matrix in \(4dn^2\) \si{\flop}
when the normalization factors of \cref{eq:rbt-depth-d} are combined into the diagonal matrices.
Furthermore, the storage of an \ac{rbt} also remains at \(dn\) words.

\Cref{alg:implementation} presents the outline of an \ac{fft}-like application of a two-sided semi-Parker butterfly.
\begin{algorithm*}[p]
	\caption{Two-sided semi-Parker \ac{rbt} application \(\rbt{}{U}^T\!A\rbt{}{V}\) using packed butterfly storage.}
	\label{alg:implementation}
	\begin{algorithmic}[1]
	\Procedure{RBT}{\(A\in\mathbb{R}^{n\times n}\), \(U\in\mathbb{R}^{n\times d}\), \(V\in\mathbb{R}^{n\times d}\){}}
		\State \(d \gets\) \ac{rbt} depth
		\State \(m \gets\) round up the dimension of \(A\)
			such that \(2^{-d}m\) is nicely aligned
			\label{alg:implementation:reference-size}
		\For{\(k\) from \(d-1\) to \(0\)} \label{fig:implementation:loop-1}
			\State \(b_n \gets 2^k\) \Comment{Number of butterflies}
			\State \(h \gets m/(2b_n)\) \Comment{Size of half of a butterfly}

			\For{\(b_i,b_{\!j} \in \{0,\ldots,b_n\}^2\)}
				\State \(j_1 \gets 2b_{\!j}h;\  {\!j}_2 \gets {\!j}_1+h;\  {\!j}_3 \gets \min({\!j}_2+h, n)\) \Comment{Column indices for right butterflies}
				\State \(i_1 \gets 2b_ih;\  i_2 \gets i_1+h;\  i_3 \gets \min(i_2+h, n)\) \Comment{Row indices for left butterflies}

				\State \(\text{\textproc{RBT2}}(A[\range{i_1}{i_2}, \range{j_1}{j_2}], A[\range{i_1}{i_2}, \range{j_1}{j_2}], A[\range{i_2}{i_3}, \range{j_2}{j_3}], A[\range{i_2}{i_3}, \range{j_2}{j_3}],\)
				\Statex \hskip\algorithmicindent\hskip\algorithmicindent\hskip\algorithmicindent\relax\(\phantom{\text{\textproc{RBT2}}(} U[\range{i_1}{i_2}, k], U[\range{i_2}{i_3}, k], V[\range{j_1}{j_2}, k], V[\range{j_2}{j_3}, k])\)
			\EndFor
		\EndFor
	\EndProcedure

	\Statex

	\Procedure{RBT2}{\(A_{11}, A_{12}, A_{21}, A_{22},U_1, U_2, V_1, V_2\){}}
		\State \(m_{b1}, n_{b1} \gets \mathrm{dim}(A_{11});\  m_{b2}, n_{b2} \gets \mathrm{dim}(A_{22})\) \Comment{Assuming \(m_{b1}\geq m_{b2}\) and \(n_{b1}\geq n_{b2}\)}
		\For{\(j\) from \(0\) to \(n_{b2}\)}
			\For{\(i\) from \(0\) to \(m_{b2}\)}
				\State \(a_{11} \gets A_{11}[i,j];\  a_{21} \gets A_{21}[i,j];\  a_{12} \gets A_{12}[i,j];\  a_{22} \gets A_{22}[i,j]\)

				\State \(A_{11}[i,j] \gets 2^{-1}(U_1[i]a_{11}V_1[j]+U_1[i]a_{12}V_2[j]+U_2[i]a_{21}V_1[j]+U_2[i]a_{22}V_2[j])\)
				\State \(A_{12}[i,j] \gets 2^{-1}(U_1[i]a_{11}V_1[j]-U_1[i]a_{12}V_2[j]+U_2[i]a_{21}V_1[j]-U_2[i]a_{22}V_2[j])\)
				\State \(A_{21}[i,j] \gets 2^{-1}(U_1[i]a_{11}V_1[j]+U_1[i]a_{12}V_2[j]-U_2[i]a_{21}V_1[j]-U_2[i]a_{22}V_2[j])\)
				\State \(A_{22}[i,j] \gets 2^{-1}(U_1[i]a_{11}V_1[j]-U_1[i]a_{12}V_2[j]-U_2[i]a_{21}V_1[j]+U_2[i]a_{22}V_2[j])\)
			\EndFor

			\For{\(i\) from \(m_{b2}\) to \(m_{b1}\)}

				\State \(a_{11} \gets A_{11}[i,j];\ a_{12} \gets A_{12}[i,j]\)

				\State \(A_{11}[i,j] \gets 2^{-1/2}(U_1[i]a_{11}V_1[j]+U_1[i]a_{12}V_2[j])\)
				\State \(A_{12}[i,j] \gets 2^{-1/2}(U_1[i]a_{11}V_1[j]-U_1[i]a_{12}V_2[j])\)
			\EndFor
		\EndFor
		\For{\(j\) from \(m_{b2}\) to \(m_{b1}\)}
			\For{\(i\) from \(0\) to \(m_{b2}\)}
				\State \(a_{11} \gets A_{11}[i,j];\ a_{21} \gets A_{21}[i,j]\)

				\State \(A_{11}[i,j] \gets 2^{-1/2}(U_1[i]a_{11}V_1[j]+U_2[i]a_{21}V_1[j])\)
				\State \(A_{21}[i,j] \gets 2^{-1/2}(U_1[i]a_{11}V_1[j]-U_2[i]a_{21}v_1[j])\)
			\EndFor
			\For{\(i\) from \(m_{b2}\) to \(m_{b1}\)}
				\State \(A_{11}[i,j] \gets U_1[i]A_{11}[i,j]V_1[j]\)
			\EndFor
		\EndFor
	\EndProcedure
	\end{algorithmic}
\end{algorithm*}%
This algorithm assumes \(U\) and \(V\) contain only the random coefficients, as per the packed storage used in previous works~\cite{lindquistReplacingPivotingDistributed2020}.
Line~\ref{alg:implementation:reference-size} controls the butterfly structure.
For a non-tiled implementation, setting \(m\gets 2^d\ceil{2^{-d}n}\) gives the smallest \(m\) greater or equal to \(n\) that is a multiple of \(2^d\).
For a tiled implementation (e.g., a distributed matrix) with a tile size
of \(n_b\), setting \(m\gets 2^dn_b\ceil{2^{-d}n_b^{-1}n}\) ensures that
the sizes of the constituent butterfly matrices are aligned to tile
boundaries.
Once \(m\) is chosen, the size of each butterfly and the corresponding loops easily follow.
However, loop splitting is needed to handle the difference between rows with 1 nonzero and rows with 2 nonzeros.
Note that all of the loops, except the outermost one on line~\ref{fig:implementation:loop-1} can be completely parallelized.
While \cref{alg:implementation} applies the butterflies one layer at a time, for small \(d\) it is possible to reorganize the algorithm to combine all \(d\) layers into a single pass over the matrix.
This would reduce the data movement by a factor of \(d\), but would require separate implementations for different~\(d\).

\section{Stability of GENP after RBT-preprocessing}
Successfully solving a system of equations with \ac{genp} requires that the diagonal elements in the factors are non-zero.
This occurs when each leading, principal submatrix is nonsingular.
In such a case, the matrix is called strongly nonsingular.
Parker proved that transforming a matrix on both sides with Parker
\acp{rbt} of depth \(\log_2(n)\) will result in a strongly nonsingular
matrix in exact arithmetic with probability 1~\cite{parkerRandomButterflyTransformations1995}.
We redo his analysis and theorems for our generalized \ac{rbt}.
Unfortunately, our formulation in its full generality gives weaker
conclusions than Parker's formulation, so we focus on semi-Parker
\acp{rbt} instead.
Our \cref{lemma:butterfly-poly-deg-1,lemma:U-poly-deg-1-and-nonzero,thm:RBT-gives-strongly-nonsingular-matrices} match Parker's \crefTheoremNamePlural~2, 3, and~4~\cite{parkerRandomButterflyTransformations1995},
respectively.

For ease of comparison with Parker's theory, we generally follow his notation~\cite{parkerRandomButterflyTransformations1995}.
The set of strictly increasing sequences of length \(k\) drawn from \(\{1, 2, \dots, m\}\) is denoted \(\seq{m}{k}\).
The subset of \(\seq{m}{k}\) consisting of sequences whose elements are consecutive modulo \(m\) is denoted \(\conseq{m}{k}\).
Addition and modulus are both applied element-wise.
The length of a sequence, \(\alpha\), is denoted \(|\alpha|\).
The relation \(\seqeq{}{}\) considers the sequences in sorted order and tests equality element-wise.
(Note that this accounts for elements' multiplicity.)
Similarly, the relation \({\seqeqmod{\alpha}{\beta}{m}}\) applies the modulus to each element before comparing the sequences as per \(\seqeq{}{}\).

\begin{lemma}[Cf.\ Parker's \crefTheoremName~2 \cite{parkerRandomButterflyTransformations1995}]
	\label{lemma:butterfly-poly-deg-1}
	For any \(1 \leq k \leq n\), let \({\alpha, \gamma \in \seq{n}{k}}\) and let
	\[
		c(n, \alpha, \gamma) = \det\bigl( O_{\mu,\nu}^{\btsize{n}}[\alpha,\gamma] \bigr)
	\]
	be a constant with \(O_{\mu,\nu}^{\btsize{n}}\) defined by \cref{eq:O-matrix-definition}.
	Then, \(c(n, \alpha, \gamma) = 0\) if and only if \(\seqneqmod{\alpha}{\gamma}{\mu+\nu}\).  Otherwise, \(2^{-|\alpha|/2} \leq |c(n, \alpha, \gamma)| \leq 1\).
\end{lemma}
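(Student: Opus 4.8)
The plan is to exploit the block-sparsity of \(O_{\mu,\nu}^{\btsize{n}}\).  Partition \(\{1,\dots,n\}\) into the top block \(A=\{1,\dots,\mu\}\), the middle block \(B=\{\mu+1,\dots,\mu+\nu\}\), and the bottom block \(C=\{\mu+\nu+1,\dots,n\}\).  Reading off \cref{eq:O-matrix-definition}, the entry \(O_{\mu,\nu}^{\btsize{n}}[i,j]\) is nonzero precisely when \(\seqeqmod{i}{j}{\mu+\nu}\) (identifying \(i\in C\) with \(i-(\mu+\nu)\in A\)), and then \(\bigl|O_{\mu,\nu}^{\btsize{n}}[i,j]\bigr|\) equals \(1\) if \(i=j\in B\) and \(\tfrac1{\sqrt2}\) otherwise.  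Since \(n=2\mu+\nu<3(\mu+\nu)\) whenever \(n\geq1\), every residue class modulo \(\mu+\nu\) meets \(\{1,\dots,n\}\) in at most two points, and in exactly two only when its representative lies in \(A\).

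For the forward (``only if'') implication I would expand \(c(n,\alpha,\gamma)=\det\bigl(O_{\mu,\nu}^{\btsize{n}}[\alpha,\gamma]\bigr)\) by the Leibniz formula.  If \(c(n,\alpha,\gamma)\neq0\), then some permutation \(\sigma\) contributes a nonzero term, so \(O_{\mu,\nu}^{\btsize{n}}[\alpha_p,\gamma_{\sigma(p)}]\neq0\) for every \(p\); by the sparsity observation this forces \(\seqeqmod{\alpha_p}{\gamma_{\sigma(p)}}{\mu+\nu}\) for all \(p\), and since \(\sigma\) is a bijection, collecting these shows \(\alpha\) and \(\gamma\) have the same multiset of residues, i.e.\ \(\seqeqmod{\alpha}{\gamma}{\mu+\nu}\).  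Contrapositively, \(\seqneqmod{\alpha}{\gamma}{\mu+\nu}\) implies \(c(n,\alpha,\gamma)=0\).

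For the converse together with the bound, assume \(\seqeqmod{\alpha}{\gamma}{\mu+\nu}\), so that for each residue \(r\) the sequences \(\alpha\) and \(\gamma\) contain the same number (\(0\), \(1\), or \(2\)) of elements reducing to \(r\).  Permute the rows of \(O_{\mu,\nu}^{\btsize{n}}[\alpha,\gamma]\) to list them by increasing residue, placing an \(A\)-index before the matching \(C\)-index within a class, and permute the columns the same way.  By the sparsity observation the result is block diagonal, with one block per residue class that actually occurs: either a \(1\times1\) block of magnitude \(1\) (a surviving \(B\)-index) or \(\tfrac1{\sqrt2}\) (a lone \(A\)- or \(C\)-index), or the \(2\times2\) block \(\tfrac1{\sqrt2}\left[\begin{smallmatrix}1&1\\1&-1\end{smallmatrix}\right]\), whose determinant is \(-1\).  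Since the two reordering permutations affect only the sign, \(\bigl|c(n,\alpha,\gamma)\bigr|=2^{-s/2}\), where \(s\) counts the lone-\(A\)/\(C\) blocks; as \(0\le s\le|\alpha|\), this is nonzero and satisfies \(2^{-|\alpha|/2}\le\bigl|c(n,\alpha,\gamma)\bigr|\le1\).

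The substantive part is the last paragraph's case analysis---deciding which residues contribute \(1\times1\) versus \(2\times2\) blocks and checking the \(2\times2\) block explicitly---but because only absolute values are needed, the signs of the reordering permutations drop out and nothing beyond a direct computation remains.  I expect the cleanest presentation to hinge on stating the block-diagonalization precisely; the sparsity characterization of the first paragraph is what makes the rest routine, and notably orthogonality of \(O_{\mu,\nu}^{\btsize{n}}\) is never invoked.
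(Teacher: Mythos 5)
Your proof is correct and follows essentially the same route as the paper's: both rest on the observation that \(O^{\btsize{n}}_{\mu,\nu}[i,j]\neq 0\) exactly when \(\seqeqmod{i}{j}{\mu+\nu}\), and both get the bound from the resulting decomposition into \(1\times1\) entries of magnitude \(1\) or \(2^{-1/2}\) and \(2\times2\) blocks of determinant \(\pm1\); your Leibniz argument for the ``only if'' direction and the explicit permute-to-block-diagonal step are just slightly more streamlined packagings of the paper's zero-row/proportional-column and row-by-row scaling arguments. One cosmetic nit: the inequality you invoke, \(n<3(\mu+\nu)\), by itself only caps a residue class at three elements; the relevant fact is \(n=2\mu+\nu\le 2(\mu+\nu)\), which yields the ``at most two'' you state.
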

\begin{proof}
	First note that for any \(j\in\alpha\), if \(j\not\in\gamma\pmod{\mu+\nu}\), then \(O_{\mu,\nu}^{\btsize{n}}[j,\gamma]=0\) and, thus, \(c(n,\alpha,\gamma)=0\).
	There is a similar implication for any \(j\in\gamma\).
	So, assume that \(j\in\alpha\pmod{\mu+\nu}\) if and only if \(j\in\gamma\pmod{\mu+\nu}\).
	Furthermore, if \(j\in\alpha\pmod{\mu+\nu}\)
with a multiplicity of 1 but \(j\in\gamma\pmod{\mu+\nu}\) with a multiplicity of 2, then the corresponding columns in \(O_{\mu,\nu}^{\btsize{n}}[\alpha,\gamma]\) are scalar multiples of each other, implying \(\det(O_{\mu,\nu}^{\btsize{n}}[\alpha,\gamma] = 0)\).
	The determinant is similarly zero if the multiplicities are flipped.
	Hence, if \(c(n, \alpha, \gamma) \neq 0\), then \({\seqeqmod{\alpha}{\gamma}{\mu+\nu}}\).

	Finally, assume \(\seqneqmod{\alpha}{\gamma}{\mu+\nu}\).
	Thus, \(O_{\mu,\nu}^{\btsize{n}}[\alpha,\gamma]\) has either one or two nonzeros per row.
	Rows with one nonzero element simply scale the determinant by \(\pm 1\) or \(\pm 2^{-1/2}\).
	Rows with two nonzero elements exist in pairs with nonzeros present in the same columns and no other nonzeros in those columns.
	So, because that \(2\times2\) submatrix is orthogonal, it scales the determinant by \(\pm 1\).
	Hence, the congruence assumption implies that \({2^{-|\alpha|/2} \leq |c(n, \alpha, \gamma)| \leq 1}\).
\end{proof}

\begin{lemma}[Cf.\ Parker's \crefTheoremName~3~\cite{parkerRandomButterflyTransformations1995}]
	\label{lemma:U-poly-deg-1-and-nonzero}
	Let \(\rbt{n}{U}\) be a semi-Parker \ac{rbt} of size \(n\) with \(d = \ceil{\log_2(n)}+1\). 
	Then for \(1 \leq k \leq n\) and all sequences \(\alpha, \beta \in \seq{n}{k}\), the determinant \(\det(\rbt{n}{U}[\alpha,\beta])\) is a polynomial of degree at most one
	in the random variables of the \ac{rbt} with coefficients based on \(\alpha\), \(\beta\), and \(k\).
	This polynomial is either zero or a function of a different set of random variables than \(\det(\rbt{n}{U}[\gamma, \delta])\) for any other \(\gamma, \delta \in \seq{n}{k}\).
	Furthermore, if \(\alpha=[1,2,\dots,k]\), then \(\det(\rbt{n}{U}[\alpha,\beta]) \neq 0\).
\end{lemma}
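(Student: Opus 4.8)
The plan is to prove the statement by induction, following the arc of Parker's argument for his \crefTheoremName~3 and reducing the combinatorics of the subdeterminants to the single-layer case already settled in \cref{lemma:butterfly-poly-deg-1}. The structural fact I would lean on is that a semi-Parker \ac{rbt} decomposes recursively: peeling off the rightmost factors one writes $\rbt{n}{U} = \mathrm{diag}\bigl(\mathcal{U}_{\mathrm{top}},\mathcal{U}_{\mathrm{bot}}\bigr)\, O^{\btsize{n}}_{\mu_1,\nu_1}\, R_1$, where $O^{\btsize{n}}_{\mu_1,\nu_1}$ is the single butterfly of the rightmost layer, $R_1$ is the rightmost random diagonal (with entries $r_{1,j}$), and — because in a semi-Parker \ac{rbt} the butterflies of layers $2,\dots,d$ are block-diagonally aligned to the split into a top block of size $\mu_1+\nu_1$ and a bottom block of size $\mu_1$ — the remaining $d-1$ layers collapse into a block-diagonal pair $\mathcal{U}_{\mathrm{top}},\mathcal{U}_{\mathrm{bot}}$ of semi-Parker \acp{rbt} of depth $d-1$ on $\mu_1+\nu_1$, resp.\ $\mu_1$, points, each drawing on its own random variables and all disjoint from those of $R_1$. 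Rather than tracking the exact depth, I would prove the statement for every depth $d \ge \ceil{\log_2 n}$ (which in particular covers $d = \ceil{\log_2 n}+1$): when the rightmost butterfly happens to be $I_n$ one strips $R_1$ and recurses on $d$ at fixed $n$; otherwise both $\mu_1+\nu_1$ and $\mu_1$ are strictly smaller than $n$ and one recurses on $n$. The base case $n=1$ is immediate, since there $\det(\rbt{n}{U}[\alpha,\beta])$ is a single random scalar.

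For the degree-one claim, expand by Cauchy--Binet across the three factors. Because $R_1$ is diagonal, $\det(\rbt{n}{U}[\alpha,\beta]) = \bigl(\prod_{j\in\beta} r_{1,j}\bigr)\,\det\bigl((\mathrm{diag}(\mathcal{U}_{\mathrm{top}},\mathcal{U}_{\mathrm{bot}})\,O^{\btsize{n}}_{\mu_1,\nu_1})[\alpha,\beta]\bigr)$, and expanding the remaining determinant over the constant butterfly gives $\sum_{\rho\in\seq{n}{k}} \det\bigl(\mathrm{diag}(\mathcal{U}_{\mathrm{top}},\mathcal{U}_{\mathrm{bot}})[\alpha,\rho]\bigr)\, c(n,\rho,\beta)$, where by \cref{lemma:butterfly-poly-deg-1} every term with $\seqneqmod{\rho}{\beta}{\mu_1+\nu_1}$ drops out and each surviving $c(n,\rho,\beta)$ is a nonzero constant. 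The block-diagonal structure factors each surviving term as $\det(\mathcal{U}_{\mathrm{top}}[\alpha_1,\rho_1])\,\det(\mathcal{U}_{\mathrm{bot}}[\alpha_2,\rho_2])$, where $\alpha=\alpha_1\sqcup\alpha_2$, $\rho=\rho_1\sqcup\rho_2$ are the splits induced by the block boundary (the term being $0$ unless the pieces have matching sizes). By the induction hypothesis these two factors are polynomials of degree at most one in the disjoint variable sets of $\mathcal{U}_{\mathrm{top}}$, resp.\ $\mathcal{U}_{\mathrm{bot}}$, with coefficients depending only on the index sequences; multiplying through by the nonzero constant $c(n,\rho,\beta)$ and by the square-free monomial $\prod_{j\in\beta}r_{1,j}$ in still further-disjoint variables shows every term, hence $\det(\rbt{n}{U}[\alpha,\beta])$ itself, is a polynomial of degree at most one in each random variable with coefficients determined by $\alpha$, $\beta$, $k$.

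The variable-set claim then follows layer by layer. The factor $\prod_{j\in\beta}r_{1,j}$ divides $\det(\rbt{n}{U}[\alpha,\beta])$ and involves exactly the variables $\{r_{1,j}: j\in\beta\}$, so two pairs that differ in their column sequence already yield polynomials in different sets of random variables; for pairs sharing a column sequence one strips $R_1$ and descends into the block-diagonal part, where the induction hypothesis applied to $\mathcal{U}_{\mathrm{top}}$ and $\mathcal{U}_{\mathrm{bot}}$ — which live in disjoint variable sets — separates the row sequences. The subtle point, which I expect to be the main obstacle, is that the Cauchy--Binet sum over the intermediate $\rho$ contributes several terms at once, so one has to verify, by Parker's careful bookkeeping of which random variable enters which inner subdeterminant, that the ``tags'' produced by the inner random diagonals across those terms neither coincide accidentally nor cancel, and to keep this bookkeeping consistent with the depth accounting and with the stronger hypothesis introduced below as one descends the recursion. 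This is precisely the place where it matters that we restrict to semi-Parker \acp{rbt} rather than the fully general construction.

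For the last assertion I would strengthen the induction hypothesis to: $\det(\rbt{n}{U}[\alpha,\beta]) \neq 0$ whenever $\alpha\in\conseq{n}{k}$ and $\beta\in\seq{n}{k}$ is arbitrary, noting that $[1,2,\dots,k]\in\conseq{n}{k}$. In the inductive step, a consecutive $\alpha$ splits into two consecutive pieces $\alpha_1,\alpha_2$, and by \cref{lemma:butterfly-poly-deg-1} one may choose the intermediate $\rho$ so that $\seqeqmod{\rho}{\alpha}{\mu_1+\nu_1}$ and so that $\rho_1,\rho_2$ are each consecutive of the required sizes $|\beta_1|,|\beta_2|$; then $\det(\mathcal{U}_{\mathrm{top}}[\alpha_1,\rho_1])$ and $\det(\mathcal{U}_{\mathrm{bot}}[\alpha_2,\rho_2])$ are nonzero by the induction hypothesis, and since the three polynomial factors of this term sit in mutually disjoint variable sets the term cannot be cancelled by any other, so $\det(\rbt{n}{U}[\alpha,\beta])\not\equiv 0$. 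Once the variable-tracking of the previous paragraph is in place, this part is comparatively routine.
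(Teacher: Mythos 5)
Your skeleton (peel off \(\rbt{n}{U} = (\mathcal{U}_{\mathrm{top}}\oplus\mathcal{U}_{\mathrm{bot}})\,O^{\btsize{n}}R_1\), expand by Cauchy--Binet, kill terms with \cref{lemma:butterfly-poly-deg-1}, recurse) is the same as the paper's, and the degree-at-most-one part goes through as you describe. The two later claims, however, have genuine gaps. For the ``different set of random variables'' claim you extract only the column tags \(\prod_{j\in\beta}r_{1,j}\) from \(R_1\) and then hope to separate row sequences by pushing the inductive disjointness through the sum over the intermediate \(\rho\); you flag this yourself as ``the main obstacle,'' and it does not close: the inductive hypothesis only says that distinct index pairs of \(\mathcal{U}_{\mathrm{top}}\) or \(\mathcal{U}_{\mathrm{bot}}\) give distinct variable sets, while the full minor involves a \emph{union} of such sets over all surviving \(\rho\), and distinct unions of distinct sets need not be distinct. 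The paper avoids this entirely: because \(d=\ceil{\log_2(n)}+1\), the deepest layer consists of \(1\times1\) butterflies, so \(\rbt{n}{U}=R^{\btsize{1}}PR^{\btsize{n}}\) with a fresh per-row diagonal on the far left, and every nonzero minor is tagged by \(\bigl(\prod_{j\in\text{rows}}r^{\btsize{1}}_j\bigr)\bigl(\prod_{j\in\text{cols}}r^{\btsize{n}}_j\bigr)\). That left-hand tag is the whole point of the ``\(+1\)'' in the depth, and your plan (which even allows \(d=\ceil{\log_2 n}\)) never uses it.

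The last part is where the proposal actually breaks. Your strengthened hypothesis --- \(\det(\rbt{n}{U}[\alpha,\beta])\neq0\) for every \(\alpha\in\conseq{n}{k}\) and arbitrary \(\beta\) --- is false for semi-Parker \acp{rbt}: truncation destroys the cyclic symmetry behind Parker's \crefTheoremName~3. Concretely, for \(n=3\), \(d=3\), the transform is \(R_3\,\mathrm{diag}(O^{\btsize{2}}_{1,0},1)\,R_2\,O^{\btsize{3}}_{1,1}R_1\), whose \((3,2)\) entry is identically zero, so \(\alpha=[3]\in\conseq{3}{1}\), \(\beta=[2]\) already kills the claim (and, with your orientation of putting the distinguished leading sequence on the block-diagonal side, even the unstrengthened version fails: rows \([1,2]\), columns \([1,3]\) give proportional rows and a zero minor --- the leading sequence must sit on the \(O^{\btsize{n}}R_1\) side, as in the paper's proof, which writes \(\det(\rbt{n}{U}[\beta,\alpha])\)). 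The paper's construction is deliberately asymmetric precisely to dodge this: it chooses the intermediate sequence so that the truncated bottom block receives the \emph{leading} sequence \([1,\dots,|\beta_1|]\), covered by the induction hypothesis as stated, while the general (round-robin) consecutive piece is routed to the top block, which is a pure Parker \ac{rbt} of power-of-two size where Parker's \crefTheoremName~3 does apply. That is exactly where the semi-Parker restriction earns its keep, and a symmetric ``consecutive on both blocks'' induction cannot replace it. (Minor bookkeeping slips --- the congruence should be to the butterfly-side index and the piece sizes matched to the block-diagonal-side split --- would also need fixing, but they are secondary to the two gaps above.)
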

\begin{proof}

We prove the theorem by strong induction on \(n\).
The base case \(n=1\) is easy since \(\rbt{1}{U}\) is a \(1\times 1\) matrix with a single random value satisfying all of the stated properties.

The induction step can be proven by leveraging
the Binet-Cauchy theorem \cite[pg. 14]{marcusSurveyMatrixTheory1992}.  
Let \(\oplus\) denote the direct sum of two matrices, i.e.,
\[
	\rbt{\ell_0}{U}_0\!\oplus \rbt{\ell_1}{U}_1
	= \begin{bsmallmatrix} \rbt{\ell_0}{U}_0 & 0 \\ 0 & \rbt{\ell_1}{U}_1\end{bsmallmatrix}.
\]
Then, \(
	\rbt{n}{U} = \bigl(\rbt{\ell_0}{U}_0\!\oplus \rbt{\ell_1}{U}_1\bigr)O^{\btsize{n}}R
\)
with \(\ell_0+\ell_1 = n\).
Therefore,
\begin{align*}
	\det(\rbt{n}{U}[\beta,\alpha]) =&
	\sum_{\gamma\in \seq{n}{k}} \det\bigl( (\rbt{\ell_0}{U}_0\!\oplus \rbt{\ell_1}{U}_1)[\beta,\gamma] \bigr)
	\det\bigl(O^{\btsize{n}}[\gamma,\alpha]\bigr) \det\bigl(R[\alpha,\alpha]\bigr) \bigr)
\end{align*}
Furthermore, the determinant of \((\rbt{\ell_0}{U}_0\!\oplus \rbt{\ell_1}{U}_1)[\beta,\gamma]\) is zero unless there are subsequences \(\beta_0,\beta_1\) of \(1,\dots,\ell_0\) and \((\ell_0+1,\dots, n)\) whose sequence concatenation is \(\beta\) and likewise for \(\gamma_0\), \(\gamma_1\), and \(\gamma\) such that \(|\beta_0|=|\gamma_0|\) and \(|\beta_1|=|\gamma_1|\).
Then,
\begin{align*}
	&\det\bigl( (\rbt{\ell_0}{U}_0\!\oplus \rbt{\ell_1}{U}_1)[\beta,\gamma] \bigr)
	=
	\det\bigl( \rbt{\ell_0}{U}_0[\beta_0,\gamma_0] \bigr)
	\det\bigl( \rbt{\ell_1}{U}_1[\beta_1-\ell_0,\gamma_1-\ell_0] \bigr).
\end{align*}

First, by induction, these two smaller determinants are each a polynomial of degree at most one in their new (disjoint) sets of random variables.
By \cref{lemma:butterfly-poly-deg-1}, \(\det(O^{\btsize{n}}[\gamma,\alpha])\det(R[\alpha,\alpha])\) is also a polynomial of degree at most one in its variables, which are disjoint from those in \(\rbt{\ell_0}{U}_0\) and \(\rbt{\ell_1}{U}_1\).

Second, we can prove by induction that \acp{rbt} can be factored \(\rbt{n}{U} = R^{\btsize{1}}PR^{\btsize{n}}\),
where \(R^{\btsize{1}} = \mathrm{diag}(r_j^{\btsize{1}})\) and \(R^{\btsize{n}} = \mathrm{diag}(r_j^{\btsize{n}})\), and \(P\) is a product of other matrices.
So,
\[
	\det\bigl(\rbt{n}{U}[\beta,\alpha]\bigr) = \Bigl(\prod_{j\in\beta}r_j^{\btsize{1}}\Bigr) p(\beta,\alpha) \Bigl(\prod_{j\in\alpha}r_j^{\btsize{n}}\Bigr)
\]
where \(p(\beta,\alpha) = \det(P[\beta,\alpha])\) is a polynomial not involving the random variables \(r_j^{\btsize{1}}\) or \(r_j^{\btsize{n}}\).
So, \(\det( \rbt{n}{U}[\beta,\alpha])\) is nonzero if and only if \(p(\beta,\alpha)\) is nonzero.
And if \(\det( \rbt{n}{U}[\beta,\alpha])\) is nonzero, then substituting any other sequences \(\lambda,\delta\) for \(\beta,\alpha\) will yield a function of a different set of random variables.

Third, if \(\alpha = [1,2,\dots, k]\), we claim \(\det(\rbt{n}{U}[\beta,\alpha]) \neq 0\) for any fixed \(\beta\).
Continuing the factorization,
\[
	(\rbt{\ell_0}{U}_0\!\oplus \rbt{\ell_1}{U}_1)
	= R^{\btsize{1}}QR^{\btsize{\ell_0+\ell_1}}
\]
where \(QR^{\btsize{\ell_0+\ell_1}}O^{\btsize{n}} = P\).
So, by \cref{lemma:butterfly-poly-deg-1},
\begin{align*}
	\det\bigl(\rbt{n}{U}[\beta,\alpha]\bigr)
	=&
	\sum_{\gamma\in\seq{n}{k}}
	\Bigl(\prod_{j\in\beta}r_j^{\btsize{1}}\Bigr)
	q(\beta,\gamma)
	\Bigl(\prod_{j\in\gamma}r_j^{\btsize{\ell_0+\ell_1}}\Bigr)
	c(n,\gamma,\alpha)
	\Bigl(\prod_{j\in\alpha}r_j^{\btsize{n}}\Bigr)
\end{align*}
with \(q(\beta,\gamma) = \det(Q[\beta,\gamma])\).
The contribution to this sum by any given \(\gamma\) is thus either zero or a function of a different set of random variables than any other \(\gamma\).
Then, all that is needed to complete the proof is to find a value of \(\gamma\) for which both \(q[\beta,\gamma]\) and \(c(n,\gamma,\alpha)\) are nonzero since no other value can affect its contribution to the sum.

If \(\beta_0,\beta_1\) are subsequences of \(1,\dots,\ell_0\) and \(\ell_0+1,\dots,n\) whose sequence concatenation is \(\beta\),
then it is sufficient to show that there always exist \emph{consecutive} subsequences \(\gamma_0,\gamma_1\) of \(1,\dots,\ell_0\) and \(\ell_0+1,\dots,n\) such that \(\gamma\) is their concatenation, \(c(n, \gamma, \alpha)\neq 0\), and also \(|\beta_0| = |\gamma_0|\) and \(|\beta_1| = |\gamma_1|\).
(Although, \(\gamma\) itself may be nonconsecutive.)
One such pair of sequences is
\begin{align*}
	\gamma_1 &= [1, 2, \dots, |\beta_1|] + \mu+\nu \\
	\gamma_0 &= [|\beta_1|+1, \dots, |\beta_1|+|\beta_0|] \bmod{\mu+\nu}.
\end{align*}
Recall that the partitioning of \(\beta\) implies \(|\beta_1| < \mu\).
So, both \(\gamma_0\) and \(\gamma_1\) are trivially (round-robin)
consecutive.
Furthermore, \(\seqeqmod{\gamma}{\alpha}{\mu+\nu}\), and \cref{lemma:butterfly-poly-deg-1} implies that \(c(n,\gamma,\alpha)\neq 0\).
By assumption, the butterflies in \(\rbt{\ell_0}{U}_0[\beta_0,\gamma_0]\) are all Parker butterflies and, thus, its size is a power of 2.
Hence, by Parker's \crefTheoremName~3 and induction respectively, the determinants of \(\rbt{\ell_0}{U}_0[\beta_0,\gamma_0]\) and \(\rbt{\ell_1}{U}_1[\beta_1,\gamma_1]\) are nonzero polynomials.
\end{proof}

\begin{theorem}[Cf.\ Parker's \crefTheoremName~4 \cite{parkerRandomButterflyTransformations1995}]
	\label{thm:RBT-gives-strongly-nonsingular-matrices}
	Let \(A\) be a nonsingular matrix, and let \(\rbt{n}{U}\) and \(\rbt{n}{V}\) be conformant, depth-\(\ceil{\log_2(n)}+1\), semi-Parker \acp{rbt}.
	Then, with probability~1, \(\rbt[T]{n}{U}\!\!A\rbt{n}{V}\) is strongly nonsingular.
\end{theorem}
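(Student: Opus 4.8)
The plan is to follow Parker's proof of his \crefTheoremName~4~\cite{parkerRandomButterflyTransformations1995}. Write \(\widetilde A = \rbt[T]{n}{U}\!A\rbt{n}{V}\) and, for \(1\le k\le n\), set \(\tau_k=[1,2,\dots,k]\). The matrix \(\widetilde A\) is strongly nonsingular exactly when \(\det(\widetilde A[\tau_k,\tau_k])\neq0\) for every \(k\), so I would fix \(k\) and argue that this leading minor, regarded as a function of the random coefficients of \(\rbt{n}{U}\) and \(\rbt{n}{V}\), is a polynomial that is not identically zero. Since those coefficients are drawn from continuous distributions, a fixed nonzero polynomial in them vanishes only on a set of measure zero; hence each minor is nonzero with probability~1, and a union bound over the \(n\) choices of \(k\) proves the theorem. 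The depth \(\ceil{\log_2(n)}+1\) is assumed precisely so that \cref{lemma:U-poly-deg-1-and-nonzero} applies below.

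To exhibit the minor as a polynomial, apply the Cauchy--Binet theorem twice to the product \(\rbt[T]{n}{U}\!A\rbt{n}{V}\):
\[
	\det\bigl(\widetilde A[\tau_k,\tau_k]\bigr)
	= \sum_{\alpha,\beta\in\seq{n}{k}}
		\det\bigl(\rbt{n}{U}[\beta,\tau_k]\bigr)\,
		\det\bigl(A[\beta,\alpha]\bigr)\,
		\det\bigl(\rbt{n}{V}[\alpha,\tau_k]\bigr),
\]
using \(\det(\rbt[T]{n}{U}[\tau_k,\beta])=\det(\rbt{n}{U}[\beta,\tau_k])\). Because \(A\) is nonsingular it has a nonsingular \(k\times k\) submatrix \(A[\beta^\star,\alpha^\star]\). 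By \cref{lemma:U-poly-deg-1-and-nonzero}, since the column index \(\tau_k=[1,\dots,k]\) is the leading sequence, \(\det(\rbt{n}{U}[\beta^\star,\tau_k])\) and \(\det(\rbt{n}{V}[\alpha^\star,\tau_k])\) are each nonzero polynomials of degree at most one in their variables, and the variables of \(\rbt{n}{U}\) and \(\rbt{n}{V}\) are independent; hence the \((\alpha^\star,\beta^\star)\) summand is a nonzero polynomial.

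It remains to rule out cancellation among the (exponentially many) summands. For this I would reuse the factorization \(\rbt{n}{U}=R^{\btsize{1}}P R^{\btsize{n}}\) from the proof of \cref{lemma:U-poly-deg-1-and-nonzero}: it writes \(\det(\rbt{n}{U}[\beta,\tau_k]) = \bigl(\prod_{j\in\beta}r_j^{\btsize{1}}\bigr)\bigl(\prod_{j\in\tau_k}r_j^{\btsize{n}}\bigr)\,p(\beta,\tau_k)\) with \(p(\beta,\tau_k)\) free of the outermost diagonal coefficients, and analogously for \(\rbt{n}{V}\) in its own, independent coefficients. After cancelling the factors common to every summand, the summand for \((\alpha,\beta)\) is tagged by the squarefree monomial \(\prod_{j\in\beta}r_j^{\btsize{1}}\) together with the corresponding \(\rbt{n}{V}\)-monomial indexed by \(\alpha\); these tags are pairwise distinct, and the remaining factors (namely \(p(\beta,\tau_k)\), its \(\rbt{n}{V}\)-analogue, and \(\det(A[\beta,\alpha])\)) involve none of the tagging variables. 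Collecting the sum by these monomials therefore shows that \(\det(\widetilde A[\tau_k,\tau_k])\equiv0\) would force \(\det(A[\beta,\alpha])=0\) for every \((\alpha,\beta)\) with \(\det(\rbt{n}{U}[\beta,\tau_k])\not\equiv0\) and \(\det(\rbt{n}{V}[\alpha,\tau_k])\not\equiv0\) --- in particular at \((\alpha^\star,\beta^\star)\), a contradiction. Hence the minor is a nonzero polynomial, as needed.

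I expect this cancellation step to be the only real obstacle. The double Cauchy--Binet expansion, the existence of a nonsingular \(k\times k\) block of \(A\), and the genericity-and-union-bound wrap-up are routine; the actual content is that the semi-Parker structure --- encoded in \cref{lemma:U-poly-deg-1-and-nonzero} and its factorization --- makes determinants indexed by different sequences depend on distinguishable sets of random variables, so that the one good term in the Cauchy--Binet sum cannot be annihilated by the rest.
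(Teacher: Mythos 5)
Your proposal is correct and is essentially the paper's own proof: the paper simply defers to Parker's Theorem~4 with \cref{lemma:U-poly-deg-1-and-nonzero} substituted for Parker's Theorem~3, and that argument is precisely your double Cauchy--Binet expansion, the choice of a nonsingular \(k\times k\) submatrix of \(A\), the lemma's nonvanishing of the minors with leading column index \(\tau_k\), the disjoint-random-variable step to rule out cancellation, and the measure-zero wrap-up with a union bound over \(k\). The only difference is that you spell out the no-cancellation step explicitly via the \(R^{\btsize{1}}PR^{\btsize{n}}\) factorization and monomial tagging, which the paper leaves implicit in the lemma's ``different set of random variables'' clause.
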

\begin{proof}
	This theorem is a generalization of Parker's \crefTheoremName~4~\cite{parkerRandomButterflyTransformations1995}, and the proof follows identically, except for the use of \cref{lemma:U-poly-deg-1-and-nonzero} instead of Parker's equivalent.
\end{proof}

Unfortunately, this theorem does not guarantee high accuracy in a finite-precision setting.
The primary source of instability is element growth that leads to cancellation errors.
Using the Binet-Cauchy theorem, Parker suggested that the growth factor of the randomized matrix is, in some sense, the average growth over ``all possible pivoting sequences (good and bad) with the original matrix''~\cite[p.~14]{parkerRandomButterflyTransformations1995}, which can be extended to our generalized formulation.
Alternatively, using Schur's identity and the interlace theorem for singular-values, we can show that
\begin{equation}
	\label{eq:growth-bound}
	|\widehat{A}^{(k)}[i,j]| \leq \frac{\sqrt{n}\norm{2}{\widehat{A}}}{\sigma_k(\widehat{A}[1:k, 1:k])} \max_{ij} |\widehat{A}[i, j]|.
\end{equation}
Thus, the growth is determined by \(\sigma_k(\widehat{A}[1:k, 1:k])\).
For preprocessing with Gaussian transforms, Pan and Zhao have bounded this value to \(\Oh{n^{-3/2}}\) with high probability~\cite[Thm.~4.3]{panNumericallySafeGaussian2017};
unfortunately, bounding it for \ac{rbt} preprocessing is still an open problem.

For depths less than \(\ceil{\log_2(n)}+1\), not even theoretical strong nonsingularity is guaranteed.
For example, consider
\begin{equation}
	\label{eq:example-for-small-d}
	A = \begin{bmatrix}
		0 & 1 & 0 & 0 \\
		1 & 0 & 0 & 0 \\
		0 & 0 & 0 & 1 \\
		0 & 0 & 1 & 0 \\
	\end{bmatrix}\!,\
	U = \begin{bmatrix}
		r_1 & 0 & r_3 & 0 \\
		0 & r_2 & 0 & r_4 \\
		r_1 & 0 & -r_3 & 0 \\
		0 & r_2 & 0 & -r_4 \\
	\end{bmatrix}\!, \text{ and }
	V = \begin{bmatrix}
		r_5 & 0 & r_7 & 0 \\
		0 & r_6 & 0 & r_8 \\
		r_5 & 0 & -r_7 & 0 \\
		0 & r_6 & 0 & -r_8 \\
	\end{bmatrix}\!.
\end{equation}
Even though \(A\) is nonsingular (and even orthogonal), applying these depth-1 \acp{rbt} gives
\[
	U^T\!AV = \begin{bmatrix}
		0 & 2r_1r_6 & 0 & 0 \\
		2r_2r_5 & 0 & 0 & 0 \\
		0 & 0 & 0 & 2r_3r_8 \\
		0 & 0 & 2r_4r_7 & 0
	\end{bmatrix}\!,
\]
which is obviously not strongly nonsingular.
However, previous experimental results have shown that a depth-2 transform can usually achieve a reasonable accuracy in practice~\cite{beckerReducingAmountPivoting2012,baboulinAcceleratingLinearSystem2013}.

\section{Recovering Accuracy in the Solution Vector}
\label{sec:recovering-acc}

As discussed in the previous section, this approach does not guarantee high accuracy, even probabilistically.
Thus, a production-quality solver must be able to recover accuracy when the growth is too large.
Iterative refinement is generally used for this purpose but relying
upon it exclusively is often
unsuccessful~\cite{liMakingSparseGaussian1998}, especially the stationary
formulation that can only recover minor errors. Such a refinement
scheme can fail to converge if the inner solver is too inaccurate
relative to the matrix's condition
number~\cite{carsonAcceleratingSolutionLinear2018}.
Restarted GMRES can provide a more robust convergence at the cost of extra computation per iteration~\cite{carsonNewAnalysisIterative2017}.

When refinement cannot recover enough accuracy, the problem must be
re-solved with a more robust solver, such as \ac{gepp}, QR
factorization, or even a mixture of both~\cite{favergeMixingLUQR2015}.
This makes the solver much slower for problematic matrices;
however, if the \ac{rbt} solver is usually accurate and significantly faster in successful cases (as is shown in \cref{sec:exp}),
the average time to solution will be lower than that of \ac{gepp}.
Furthermore, in successful cases, this mechanism has no overhead
since iterative refinement already needs a copy of the original matrix and checks the solution's accuracy.
This fallback mechanism is an example of the ``responsibly reckless''
paradigm where a fast, reckless algorithm is combined with a responsible verification of the result~\cite{dongarraExtremeComputingRules2017}.

Recent work on mixed-precision techniques advances the
numerical analysis of the iterative refinement while fully utilizing the
underlying hardware, using up to 5 floating-point precisions
simultaneously to recover the lost digits in a method called
GMRES-IR5~\cite{amestoy2021gmresir5}. The use of these methods would
further improve the accuracy of the solution in addition to our proposed
Generalized \ac{rbt}. To make the numerical properties of our algorithm
clear, we focus our evaluation on only the simplest form of
accuracy-recovering techniques.

\section{Experimental Results}
\label{sec:exp}

To understand this generalization experimentally, we implemented an \ac{rbt}-based solver in the \ac{slate} library~\cite{gatesSLATEDesignModern2019} and tested it on the Summit supercomputer at Oak Ridge National Laboratory.
\Ac{slate} is a dense linear algebra library designed to replace ScaLAPACK in the increasingly heterogeneous landscape of high-performance computing.
\Ac{slate} stores matrices in tiles which are distributed across the processes, usually in a 2D block-cyclic fashion.
The initial implementation of the proposed generalized \ac{rbt} is
available in the \texttt{2023.11.05} release of SLATE, including
the fallback strategy discussed in \cref{sec:recovering-acc}.

Our \ac{rbt} implementation is based on \eqref{eq:rbt-depth-d} and aligns the butterfly structure to these tiles in order to avoid the expensive tile management of other distributed implementations~\cite{baboulinEfficientDistributedRandomized2014,lindquistReplacingPivotingDistributed2020}.
In other words, the top half of the first layer (i.e., the layer with a
single butterfly matrix) is chosen to have a dimension equal to the
first \(2^{d-1}\ceil{n_t2^{-d}}\) tiles where \(n_t = \ceil{n/n_b}\)
is the total number of rows of tiles in \(A\).
For simplicity, our implementation assumes that all tiles, except the last row and column, are the same size.
For non-uniform tile sizes, it deviates from \cref{eq:O-matrix-definition} to keep the butterflies aligned to tiles;
however, we did not consider the variable-tile-size case any further since we are
unaware of any applications that effectively use non-uniform tiles
in a distributed, dense factorization.
As in previous works, the random variables are taken as \(\exp(r/20)\) where \(r\) is uniformly selected from \([-1, 1]\)~\cite{baboulinAcceleratingLinearSystem2013}.
While our overall solver is GPU-enabled, our \ac{rbt} implementation only uses the CPU.
This choice is based on the fact that the \ac{rbt} kernel is inherently
memory-bound and the MPI implementations available on Summit have to
transfer data through the host.\footnote{
While the MPI implementations are capable of handling native GPU
pointers, hardware limitations prevent GPU-Direct transfers between
the GPU memories and the network without passing the data through host CPU memory.}
Thus, any speedup gained by doing the computation on the GPU devices would
be completely overshadowed by the overhead of transferring the data
between the CPU memory and the attached GPU devices.

We test the generalized \ac{rbt} solver against our Gaussian
elimination with no pivoting (\acs{genp}), tournament pivoting
(\acs{getp}), and partial pivoting (\acs{gepp}) implementations.
The first provides the best performance but is numerically unstable,
while the other two provide better numerical stability but worse performance.
Note that the performance of \ac{gepp} also provides bounds on the performance of threshold pivoting.
Additionally, we compare it with Parker \acp{rbt}, based on our previous implementation~\cite{lindquistReplacingPivotingDistributed2020}.
Note that the fallback solver was always disabled to focus on the
effects of the \ac{rbt} alone.

\subsection{Experimental Configuration}
Summit is based on IBM's POWER System AC922 node.
Each node contains two 22-core, IBM POWER9 CPUs and six NVIDIA Volta V100 GPUs which are evenly divided between two sockets.
Most of the computational capacity comes from the GPUs, each providing
peak
\SI{7.45}{\tera\flop\per\second} (80 SMs of 64 CUDA cores clocked at
1.455 GHz) and
\SI{16}{\gibi\byte} \ac{hbm2} providing
\SI{900}{\giga\byte\per\second} peak main-memory bandwidth.
Each CPU provides \SI{540}{\giga\flop\per\second} peak performance,
\SI{256}{\gibi\byte} DDR4 memory,
and \SI{170}{\giga\byte\per\second} peak main-memory bandwidth.
Note that 1 core of each CPU is reserved for overheads associated with
OS tasks and is not accessible by the user applications.
NVLINK provides a peak bidirectional \SI{50}{\giga\byte\per\second}
transfer rate between the CPU and GPUs in a socket.
A dual-rail EDR InfiniBand network connects the nodes with a bandwidth of \SI{23}{\giga\byte\per\second}.
All nodes use Red Hat Enterprise Linux (RHEL) version 8.2.

The code was compiled with GCC 9.1.0 and CUDA 11.0.3.
It was linked against IBM's Spectrum MPI 10.4.0.3, ESSL 6.1.0-2,
Netlib LAPACK 3.8.0, and Netlib ScaLAPACK 2.1.0.
The code used for these experiments and the output files are available online at \url{https://figshare.com/s/4c4b0ffd1ef44484e884}.

Each job is submitted to the scheduler with the flags \texttt{-nnodes~8 -alloc\_flags~smt1}; the second flag disables simultaneous multithreading.
Processes were launched with ORNL-provided utility
\texttt{jsrun -n~16 -a~1 -c~21 -g~3 -b~packed:21 -d~packed}, which allocates one process per socket and binds the threads to the corresponding CPUs and limits CUDA to the corresponding GPUs.
The tester code from \ac{slate} was used for all performance and accuracy results, with a modification to print the accuracy as per \cref{eq:inf-norm-backerr}.
We tuned \ac{slate}'s parameters using 8 nodes and \(n=\num{150000}\).
All tests were configured with \texttt{-{}-origin~h -{}-target~d -{}-type~d -{}-ref~n -{}-lookahead~2 -{}-ib~64 -{}-nrhs~1}. In other words:
\begin{itemize}
\item The matrix and vector data originates on the host.
\item The majority of computation is done on the GPUs.
\item Double precision is used for matrix and vector elements.
\item The ScaLAPACK reference implementation is not run.
\item Two lookahead tasks are used with depth
2~\cite[Sec.~5]{gatesSLATEDesignModern2019}.
\item A blocking factor of 64 is used when factoring the panel or the diagonal tile.
\item There was one right-hand side vector \(b\) and a corresponding
single vector of unknowns \(x\).
\end{itemize}
The matrix generator is configured with \texttt{-{}-matrixB~rand
-{}-seed~42 -{}-seedB~64} plus \texttt{-{}-matrix} option with the
appropriate argument shown in the first column of
Tab.~\ref{tab:acc:back-error};
this results in the same random right-hand side for each test with the
same problem size and ensures that the system matrices are reproducible.
\Ac{genp} and the \ac{rbt}-solvers were configured with \texttt{-{}-nb~512 -{}-fallback~n}, i.e., a tile size of 512 and the fallback mechanism is disabled.
Note that our tests always use the \texttt{gesv\_rbt} routine for \ac{genp} results; overheads for the \ac{rbt} and the iterative refinement are only present when the depth or iteration limit, respectively, are larger than 0.
\Ac{gepp} was configured with \texttt{-{}-nb~896 -{}-panel-threads~20}, i.e., a tile size of 896 and 20 threads to factor the panels.
\Ac{getp} was configured with \texttt{-{}-nb~768 -{}-panel-threads~2}, i.e., a tile size of 768 and 2 threads to do the tournament reduction.
Finally, the flags \verb|--check| and \verb|--dim| were configured as appropriate for each test, as well as \verb|--refine| and \verb|--depth| for the \ac{genp} and \ac{rbt} solvers.
Because MPI and BLAS libraries often initialize their internal state and
allocate temporary buffers on the first call, warm-up tests of size
\num{10000} were run first during each test where performance was
measured. This avoided uniform overheads across or measurements without
the influence of library initialization.

\subsection{Accuracy}
\label{sec:exp:acc}

To demonstrate the accuracy of the approach, we tested the solvers with a variety of matrices and present the results in \cref{tab:acc:back-error}.
The accuracy was measured with the \(\infty\)-norm backward error:
\begin{equation}
	\label{eq:inf-norm-backerr}
	\frac{\norm{\infty}{b - Ax}}{\norm{\infty}{A}\norm{\infty}{x} + \norm{\infty}{b}}.
\end{equation}
Problems were of size \(\num{150000} = 292\times 512 + 496\), which
results in the generalized \ac{rbt} method blocking the matrix
differently than the original Parker \ac{rbt}.
\begin{table*}
	\centering
	\caption{Comparison between the \ac{rbt} solver, \ac{gepp},
        \ac{getp}, and \ac{genp} for the \(\infty\)-norm backward error
        for various matrices of size \(n=\num{150000}\).}
	\label{tab:acc:back-error}
	\setlength\tabcolsep{4pt}
	\begin{tabular}{r*{8}{S[table-format=1.1e+2,round-mode=places,table-auto-round,scientific-notation=true,retain-zero-exponent=true,tight-spacing]}}  \toprule  Matrix & {\ac{gepp}} & {\ac{getp}} & {\ac{genp}} & {\ac{rbt}}  & {\ac{rbt}} & {Parker \ac{rbt}}  & {Parker \ac{rbt}} \\         &             &             &             & {refined}   &            & {refined}          &                   \\  \midrule
\matname{rand+nI} & 1.95e-14 & 1.87e-14 & 1.84e-14 & 2.06e-16 & 2e-14 & 2.06e-16 & 1.81e-14 \\
\matname{rand} & 2.52e-14 & 4.18e-14 & 2.36e-10 & 2.7e-17 & 2.09e-10 & 2.65e-17 & 6.98e-11 \\
\matname{rands} & 5.61e-14 & 8.06e-14 & 2.71e-10 & 4.36e-17 & 1.25e-09 & 4.4e-17 & 1.55e-09 \\
\matname{randn} & 5.34e-14 & 9.92e-14 & 6.84e-10 & 4.39e-17 & 3.43e-10 & 3.86e-17 & 3.45e-10 \\
\matname{randb} & 4.01e-14 & 6.21e-14 & {NaN} & 2.17e-17 & 7.46e-10 & 2.53e-17 & 1.58e-09 \\
\matname{randr} & 4.44e-14 & 7.53e-14 & {NaN} & 3.47e-17 & 1.62e-09 & 1.87e-15 & 4.9e-08 \\
\matname{chebspec} & 3.4e-16 & 2.95e-16 & 2.04e-09 & 1.06e-17 & 9.83e-14 & 2.69e-17 & 2.75e-13 \\
\matname{circul} & 1.43e-17 & 1.62e-17 & 9.45e-14 & 2.52e-18 & 1e-17 & 2.54e-18 & 9.03e-18 \\
\matname{fiedler} & 2.13e-18 & 1.03e-17 & {NaN} & 2.26e-18 & 1.8e-17 & 2e-18 & 7.66e-17 \\
\matname{gfpp} & {NaN} & {NaN} & {NaN} & 7.98e-19 & 3.74e-17 & 1.14e-18 & 1.78e-17 \\
\matname{orthog} & 3.24e-17 & 3.16e-17 & 2.8e-05 & 0.00101 & 0.000838 & 0.000841 & 0.000918 \\
\matname{ris} & 1.41e-16 & 9.82e-17 & 0.105 & 0.146 & 0.159 & 0.135 & 0.135 \\
\matname{riemann} & 4.08e-14 & 5.66e-14 & 2.78e-12 & 0.00143 & 0.00146 & 5.79e-08 & 0.00118 \\
\matname{zielkeNS} & 3.47e-19 & 1.05e-18 & {NaN} & 8.91e-19 & 1.29e-17 & 1.06e-18 & 1.98e-16 \\
\bottomrule\end{tabular}

\end{table*}%
Both formulations of the \ac{rbt} solver were tested with and without two steps of iterative refinement.
Additionally, allowing iterative refinement to follow \ac{genp} enabled
it to achieve a double-precision solution in all cases except
\matname{orthog}, \matname{ris}, and those with NaN values.\footnote{The
backward errors for \ac{genp} with iterative refinement can be found in
our results' artifact.}
With iterative refinement, the butterfly solver provided similar or better accuracy than \ac{gepp} for eleven of the fourteen matrices.
It was even able to solve the \matname{gfpp} matrix for which \ac{gepp} has catastrophic element growth that overflows double-precision.
The three matrices with worse accuracy are analyzed in \cref{sec:exp:orr}.
Furthermore, any \ac{rbt} provides significantly better accuracy than
\ac{genp} for six matrices and provides significantly worse accuracy
for only one matrix (\matname{riemann}).
Of the problems where two steps of iterative refinement did not reach
double-precision accuracy, the refinement provided negligible benefit
compared to the plain factorization in all cases except the one with
the Parker \ac{rbt}.

Additionally, the generalized formulation of the \ac{rbt} gave similar accuracy to Parker's formulation.
The one exception is that, for \matname{riemann}, two steps of iterative refinement were able to improve the accuracy halfway to double-precision for Parker's formulation but not ours.
(Continuing the iterative refinement allowed the Parker \ac{rbt}-solver to reach double-precision accuracy after five iterations.)
This difference is surprising, given the similarity in initial backward error;
we suspect there is a subtle interaction between the \ac{rbt} structure, its resulting errors, and the singular- or eigen-vectors of the matrix.

To further demonstrate the validity of our generalized \ac{rbt} structure, \cref{fig:exp:acc-gen-bt-struct-chebspec,fig:exp:acc-gen-bt-struct-circul,fig:exp:acc-gen-bt-struct-fiedler} show the accuracy of the \ac{rbt} solver for three matrices of varying matrix sizes.
\begin{figure}
	\centering
	\begin{tikzpicture}\begin{semilogyaxis}[width=4in, height=2.5in,ylabel=Backward error \(\left(\frac{\norm{\infty}{b-Ax}}{\norm{\infty}{A}\norm{\infty}{x} + \norm{\infty}{b}}\right)\),ymin=1e-17, ymax=1e-16,xlabel=\(n\),xticklabel style={/pgfplots/scaled x ticks=false},legend style={at={(1.02,1)},anchor=north west},legend cell align=left,]
\addplot[only marks,orange!80!black,mark=diamond*] coordinates {(20484,2.08e-17) (20488,1.9e-17) (20492,1.9e-17) (20496,1.81e-17) (20500,1.78e-17) (20504,1.82e-17) (20508,1.82e-17) (20512,1.62e-17) (20516,1.71e-17) (20520,1.85e-17) (20524,2.05e-17) (20528,1.91e-17) (20532,1.98e-17) (20536,1.87e-17) (20540,1.78e-17) (20544,1.96e-17) (20548,1.95e-17) (20552,1.83e-17) (20556,1.69e-17) (20560,1.79e-17) (20564,2.18e-17) (20568,2.11e-17) (20572,1.64e-17) (20576,1.84e-17) (20580,2.11e-17) (20584,1.74e-17) (20588,2.11e-17) (20592,1.92e-17) (20596,1.91e-17) (20600,1.8e-17) (20604,1.87e-17) (20608,1.82e-17) (20612,1.8e-17) (20616,1.75e-17) (20620,2.22e-17) (20624,1.67e-17) (20628,1.93e-17) (20632,1.91e-17) (20636,1.75e-17) (20640,1.9e-17) (20644,1.9e-17) (20648,2.17e-17) (20652,2.09e-17) (20656,2.07e-17) (20660,1.9e-17) (20664,1.81e-17) (20668,1.92e-17) (20672,2.27e-17) (20676,2.07e-17) (20680,1.83e-17) (20684,1.96e-17) (20688,1.78e-17) (20692,1.74e-17) (20696,2.33e-17) (20700,1.79e-17) (20704,2.32e-17) (20708,2.12e-17) (20712,1.67e-17) (20716,2.02e-17) (20720,1.82e-17) (20724,1.8e-17) (20728,1.84e-17) (20732,1.93e-17) (20736,2.09e-17) (20740,1.64e-17) (20744,1.91e-17) (20748,1.73e-17) (20752,1.96e-17) (20756,1.95e-17) (20760,1.75e-17) (20764,1.79e-17) (20768,1.92e-17) (20772,1.69e-17) (20776,1.92e-17) (20780,1.76e-17) (20784,2.12e-17) (20788,1.94e-17) (20792,1.67e-17) (20796,1.82e-17) (20800,2e-17) (20804,1.96e-17) (20808,2.01e-17) (20812,1.79e-17) (20816,1.91e-17) (20820,1.82e-17) (20824,1.67e-17) (20828,1.72e-17) (20832,1.7e-17) (20836,1.99e-17) (20840,1.86e-17) (20844,1.83e-17) (20848,2.09e-17) (20852,2.39e-17) (20856,1.82e-17) (20860,1.74e-17) (20864,1.86e-17) (20868,1.87e-17) (20872,2.2e-17) (20876,1.88e-17) (20880,1.82e-17) (20884,1.97e-17) (20888,2e-17) (20892,1.76e-17) (20896,1.76e-17) (20900,1.69e-17) (20904,1.88e-17) (20908,1.83e-17) (20912,1.72e-17) (20916,1.81e-17) (20920,2.02e-17) (20924,1.82e-17) (20928,1.64e-17) (20932,1.73e-17) (20936,2.05e-17) (20940,1.89e-17) (20944,1.78e-17) (20948,2.29e-17) (20952,2.12e-17) (20956,2e-17) (20960,1.64e-17) (20964,2.06e-17) (20968,2e-17) (20972,1.93e-17) (20976,1.8e-17) (20980,1.81e-17) (20984,1.74e-17) (20988,1.73e-17) (20992,2.34e-17) (20996,1.81e-17) (21000,1.93e-17) (21004,1.84e-17) (21008,1.75e-17) (21012,2.16e-17) (21016,1.92e-17) (21020,1.93e-17) (21024,2.06e-17) (21028,2.04e-17) (21032,1.94e-17) (21036,1.62e-17) (21040,2.45e-17) (21044,1.76e-17) (21048,2.17e-17) (21052,1.89e-17) (21056,2.49e-17) (21060,2.18e-17) (21064,1.98e-17) (21068,2.07e-17) (21072,1.98e-17) (21076,1.71e-17) (21080,1.98e-17) (21084,2.14e-17) (21088,1.89e-17) (21092,1.74e-17) (21096,1.8e-17) (21100,1.66e-17) (21104,1.97e-17) (21108,1.81e-17) (21112,1.79e-17) (21116,1.86e-17) (21120,1.82e-17) (21124,1.96e-17) (21128,1.83e-17) (21132,2.05e-17) (21136,2.24e-17) (21140,2.05e-17) (21144,1.92e-17) (21148,2.1e-17) (21152,1.79e-17) (21156,1.72e-17) (21160,1.71e-17) (21164,1.88e-17) (21168,1.98e-17) (21172,1.93e-17) (21176,1.86e-17) (21180,2.64e-17) (21184,1.78e-17) (21188,1.86e-17) (21192,1.69e-17) (21196,1.83e-17) (21200,1.89e-17) (21204,2.22e-17) (21208,1.88e-17) (21212,1.76e-17) (21216,2.18e-17) (21220,1.97e-17) (21224,1.9e-17) (21228,1.72e-17) (21232,1.95e-17) (21236,1.79e-17) (21240,1.94e-17) (21244,2.24e-17) (21248,1.94e-17) (21252,1.84e-17) (21256,1.83e-17) (21260,2.07e-17) (21264,1.85e-17) (21268,2.02e-17) (21272,1.89e-17) (21276,1.8e-17) (21280,1.61e-17) (21284,1.96e-17) (21288,1.71e-17) (21292,1.94e-17) (21296,2.02e-17) (21300,1.76e-17) (21304,2.24e-17) (21308,2.05e-17) (21312,2.3e-17) (21316,1.86e-17) (21320,1.82e-17) (21324,1.9e-17) (21328,1.93e-17) (21332,1.92e-17) (21336,1.87e-17) (21340,1.78e-17) (21344,1.96e-17) (21348,1.84e-17) (21352,2.32e-17) (21356,1.84e-17) (21360,1.94e-17) (21364,1.75e-17) (21368,2.17e-17) (21372,2.16e-17) (21376,1.84e-17) (21380,2.15e-17) (21384,2.36e-17) (21388,2.15e-17) (21392,1.96e-17) (21396,2.04e-17) (21400,1.83e-17) (21404,1.82e-17) (21408,1.64e-17) (21412,1.82e-17) (21416,2.02e-17) (21420,2.15e-17) (21424,1.96e-17) (21428,1.83e-17) (21432,2.07e-17) (21436,1.93e-17) (21440,1.85e-17) (21444,2.05e-17) (21448,1.74e-17) (21452,1.76e-17) (21456,1.93e-17) (21460,1.68e-17) (21464,2.17e-17) (21468,2.1e-17) (21472,1.82e-17) (21476,1.79e-17) (21480,1.91e-17) (21484,1.86e-17) (21488,1.94e-17) (21492,1.89e-17) (21496,1.9e-17) (21500,1.93e-17) (21504,1.79e-17) (21508,1.98e-17) (21512,1.9e-17) (21516,2.19e-17) (21520,1.91e-17) (21524,1.94e-17) (21528,1.98e-17) (21532,1.8e-17) (21536,1.74e-17) (21540,2.17e-17) (21544,1.92e-17) (21548,1.9e-17) (21552,1.86e-17) (21556,1.89e-17) (21560,2.03e-17) (21564,1.83e-17) (21568,2.16e-17) (21572,2.05e-17) (21576,2.05e-17) (21580,2.05e-17) (21584,2.13e-17) (21588,2.1e-17) (21592,2.53e-17) (21596,1.78e-17) (21600,1.93e-17) (21604,2.09e-17) (21608,2.25e-17) (21612,1.72e-17) (21616,1.84e-17) (21620,1.88e-17) (21624,2.14e-17) (21628,2.01e-17) (21632,1.92e-17) (21636,2.39e-17) (21640,2.36e-17) (21644,1.86e-17) (21648,2.51e-17) (21652,1.77e-17) (21656,1.8e-17) (21660,1.91e-17) (21664,2.02e-17) (21668,1.78e-17) (21672,2e-17) (21676,2.02e-17) (21680,2.18e-17) (21684,1.79e-17) (21688,2.01e-17) (21692,1.77e-17) (21696,1.73e-17) (21700,2.26e-17) (21704,1.9e-17) (21708,1.89e-17) (21712,2.19e-17) (21716,1.88e-17) (21720,1.68e-17) (21724,1.97e-17) (21728,2.05e-17) (21732,1.89e-17) (21736,1.81e-17) (21740,1.94e-17) (21744,2.16e-17) (21748,1.83e-17) (21752,1.84e-17) (21756,1.99e-17) (21760,1.86e-17) (21764,1.82e-17) (21768,1.84e-17) (21772,1.79e-17) (21776,1.94e-17) (21780,2.08e-17) (21784,1.77e-17) (21788,1.95e-17) (21792,2.05e-17) (21796,1.78e-17) (21800,2.03e-17) (21804,2.14e-17) (21808,2.14e-17) (21812,1.8e-17) (21816,1.91e-17) (21820,1.72e-17) (21824,1.88e-17) (21828,2.15e-17) (21832,1.63e-17) (21836,1.81e-17) (21840,1.72e-17) (21844,2.05e-17) (21848,1.9e-17) (21852,1.67e-17) (21856,2.01e-17) (21860,1.77e-17) (21864,1.89e-17) (21868,1.84e-17) (21872,2.2e-17) (21876,1.83e-17) (21880,1.88e-17) (21884,1.92e-17) (21888,1.77e-17) (21892,1.78e-17) (21896,1.93e-17) (21900,2.02e-17) (21904,1.65e-17) (21908,1.82e-17) (21912,1.82e-17) (21916,1.95e-17) (21920,1.8e-17) (21924,1.78e-17) (21928,1.97e-17) (21932,1.98e-17) (21936,1.59e-17) (21940,2.23e-17) (21944,1.93e-17) (21948,1.95e-17) (21952,1.92e-17) (21956,1.73e-17) (21960,2e-17) (21964,1.73e-17) (21968,1.73e-17) (21972,1.89e-17) (21976,2.01e-17) (21980,2.08e-17) (21984,1.89e-17) (21988,1.83e-17) (21992,2.07e-17) (21996,2.31e-17) (22000,2.02e-17) (22004,1.86e-17) (22008,2.19e-17) (22012,1.82e-17) (22016,1.84e-17) (22020,1.91e-17) (22024,1.79e-17) (22028,2.04e-17) (22032,1.92e-17) (22036,2.19e-17) (22040,1.95e-17) (22044,2.13e-17) (22048,1.94e-17) (22052,2.47e-17) (22056,2.07e-17) (22060,2.08e-17) (22064,1.76e-17) (22068,1.88e-17) (22072,1.96e-17) (22076,1.88e-17) (22080,2.07e-17) (22084,1.77e-17) (22088,1.93e-17) (22092,2.13e-17) (22096,2.33e-17) (22100,2.03e-17) (22104,1.77e-17) (22108,1.93e-17) (22112,1.89e-17) (22116,1.95e-17) (22120,1.92e-17) (22124,2.03e-17) (22128,1.92e-17) (22132,1.9e-17) (22136,2e-17) (22140,1.75e-17) (22144,1.65e-17) (22148,1.84e-17) (22152,1.88e-17) (22156,2.05e-17) (22160,1.89e-17) (22164,1.76e-17) (22168,2.34e-17) (22172,1.66e-17) (22176,2.18e-17) (22180,2.06e-17) (22184,2.07e-17) (22188,1.97e-17) (22192,2.04e-17) (22196,2.32e-17) (22200,2.39e-17) (22204,1.58e-17) (22208,2.34e-17) (22212,1.87e-17) (22216,1.76e-17) (22220,1.84e-17) (22224,1.85e-17) (22228,1.73e-17) (22232,2e-17) (22236,1.9e-17) (22240,1.82e-17) (22244,2.16e-17) (22248,1.93e-17) (22252,1.74e-17) (22256,1.95e-17) (22260,1.85e-17) (22264,2e-17) (22268,1.94e-17) (22272,1.93e-17) (22276,1.99e-17) (22280,1.74e-17) (22284,1.9e-17) (22288,1.91e-17) (22292,1.74e-17) (22296,1.95e-17) (22300,2.01e-17) (22304,1.95e-17) (22308,1.95e-17) (22312,2.25e-17) (22316,1.81e-17) (22320,1.89e-17) (22324,1.84e-17) (22328,1.98e-17) (22332,2.06e-17) (22336,2.01e-17) (22340,1.76e-17) (22344,1.96e-17) (22348,1.95e-17) (22352,2.02e-17) (22356,1.72e-17) (22360,1.77e-17) (22364,1.71e-17) (22368,2.08e-17) (22372,1.97e-17) (22376,1.79e-17) (22380,2.13e-17) (22384,1.83e-17) (22388,1.86e-17) (22392,1.89e-17) (22396,1.91e-17) (22400,1.82e-17) (22404,2.24e-17) (22408,1.99e-17) (22412,2.19e-17) (22416,1.79e-17) (22420,1.74e-17) (22424,1.79e-17) (22428,1.72e-17) (22432,1.81e-17) (22436,1.73e-17) (22440,1.98e-17) (22444,1.91e-17) (22448,2.04e-17) (22452,2.01e-17) (22456,2.28e-17) (22460,1.81e-17) (22464,1.87e-17) (22468,2.01e-17) (22472,1.78e-17) (22476,1.87e-17) (22480,2.11e-17) (22484,1.78e-17) (22488,1.72e-17) (22492,1.91e-17) (22496,2.08e-17) (22500,1.98e-17) (22504,1.88e-17) (22508,1.83e-17) (22512,1.72e-17) (22516,1.79e-17) (22520,1.99e-17) (22524,1.71e-17)};
\addlegendentry{Non-Parker \acsp{rbt}};
\addplot[only marks,black,mark=triangle*] coordinates {(20480,2.37e-17) (22528,1.79e-17)};
\addlegendentry{Parker \acsp{rbt}};
\end{semilogyaxis} \end{tikzpicture}
	\caption{Accuracy of the \ac{rbt}-solver without iterative refinement for the various sizes of the \matname{circul} matrix.}
	\label{fig:exp:acc-gen-bt-struct-circul}
	\Description{A cloud of points between \num{3e-17} and \num{1e-17} representing the backward errors for the non-Parker \ac{rbt} tests.  On either end, the Parker \ac{rbt} tests have similar errors.}
\end{figure}%
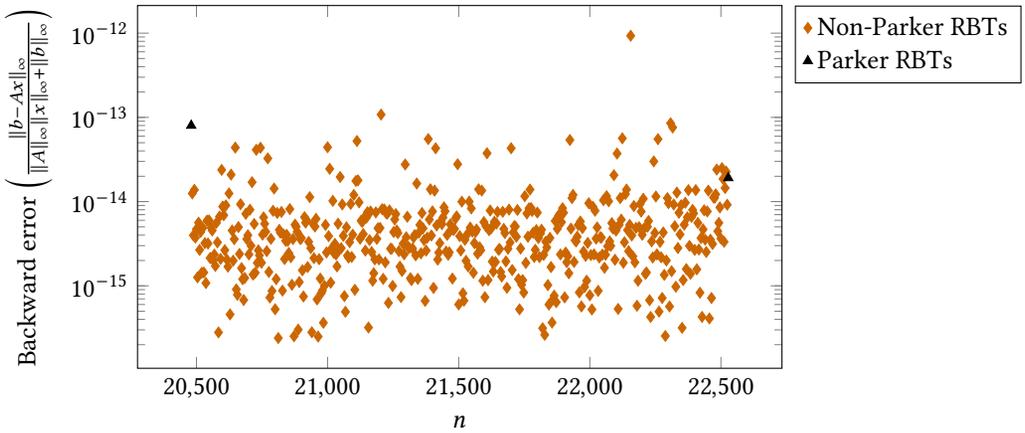
\begin{figure}
	\centering
	\begin{tikzpicture}\begin{semilogyaxis}[width=4in, height=2.5in,ylabel=Backward error \(\left(\frac{\norm{\infty}{b-Ax}}{\norm{\infty}{A}\norm{\infty}{x} + \norm{\infty}{b}}\right)\),xlabel=\(n\),xticklabel style={/pgfplots/scaled x ticks=false},legend style={at={(1.02,1)},anchor=north west},legend cell align=left,]
\addplot[only marks,orange!80!black,mark=diamond*] coordinates {(20484,1.25e-14) (20488,3.97e-15) (20492,1.38e-14) (20496,3.74e-15) (20500,4.71e-15) (20504,1.26e-15) (20508,5.63e-15) (20512,2.67e-15) (20516,4.37e-15) (20520,1.43e-15) (20524,4.87e-15) (20528,1.44e-15) (20532,3.21e-15) (20536,1.08e-15) (20540,5.94e-15) (20544,3.18e-15) (20548,5.98e-15) (20552,2.14e-15) (20556,4.9e-15) (20560,4.54e-15) (20564,2e-15) (20568,2.35e-15) (20572,1.72e-15) (20576,5.59e-15) (20580,3.28e-15) (20584,2.8e-16) (20588,6.65e-15) (20592,2.09e-15) (20596,2.38e-14) (20600,8.73e-15) (20604,6.89e-15) (20608,2.65e-15) (20612,9.02e-15) (20616,2.05e-15) (20620,1.69e-15) (20624,1.25e-14) (20628,4.58e-16) (20632,2.09e-14) (20636,4.54e-15) (20640,1.99e-15) (20644,3.58e-15) (20648,4.38e-14) (20652,9.08e-16) (20656,7.83e-16) (20660,4.98e-15) (20664,2.39e-15) (20668,9.31e-15) (20672,1.62e-15) (20676,1.2e-15) (20680,6.79e-16) (20684,1.24e-15) (20688,7.16e-15) (20692,3.59e-15) (20696,3.99e-15) (20700,2.36e-15) (20704,8.01e-15) (20708,2.99e-15) (20712,1.7e-14) (20716,1.36e-15) (20720,5.43e-15) (20724,1.43e-15) (20728,4.12e-14) (20732,1.87e-15) (20736,2.64e-15) (20740,2.3e-15) (20744,4.36e-14) (20748,1.88e-15) (20752,4.06e-15) (20756,6.5e-15) (20760,5.45e-15) (20764,4.39e-15) (20768,2.58e-15) (20772,3.26e-14) (20776,1.46e-15) (20780,7.27e-16) (20784,3.61e-15) (20788,8.69e-16) (20792,7.78e-15) (20796,1.43e-14) (20800,5.27e-16) (20804,1.2e-15) (20808,7.33e-15) (20812,2.4e-16) (20816,4.61e-15) (20820,4.27e-15) (20824,7.53e-15) (20828,8.77e-16) (20832,4.08e-15) (20836,2.08e-15) (20840,3.43e-15) (20844,4.03e-15) (20848,3.42e-15) (20852,2.45e-15) (20856,8.15e-15) (20860,2.19e-15) (20864,1.03e-15) (20868,3.59e-15) (20872,2.52e-16) (20876,5.33e-15) (20880,5.02e-15) (20884,4.37e-15) (20888,3.02e-16) (20892,1.11e-15) (20896,3.59e-15) (20900,3.69e-15) (20904,1.53e-15) (20908,7.47e-16) (20912,6.54e-15) (20916,5.63e-15) (20920,6.02e-15) (20924,1.67e-15) (20928,3.33e-15) (20932,1.13e-14) (20936,2.8e-15) (20940,2.81e-16) (20944,5.46e-15) (20948,2.13e-15) (20952,5.06e-15) (20956,6.24e-15) (20960,6.9e-16) (20964,2.5e-16) (20968,2.07e-15) (20972,1.22e-15) (20976,8.2e-16) (20980,8.78e-16) (20984,3.64e-16) (20988,9.98e-16) (20992,5.28e-15) (20996,2.49e-15) (21000,4.42e-14) (21004,3.89e-15) (21008,2.45e-14) (21012,1.09e-15) (21016,2.79e-15) (21020,1.02e-14) (21024,2.46e-15) (21028,3.15e-15) (21032,2.36e-15) (21036,1.7e-15) (21040,5.46e-15) (21044,2.67e-15) (21048,1.96e-14) (21052,2.8e-15) (21056,9.25e-15) (21060,3.92e-15) (21064,7.55e-16) (21068,4.92e-16) (21072,4.95e-15) (21076,2.28e-15) (21080,2.17e-15) (21084,3.3e-15) (21088,9.39e-15) (21092,2.2e-15) (21096,4.05e-15) (21100,1.2e-14) (21104,8.98e-16) (21108,1.76e-14) (21112,5.24e-14) (21116,1.79e-14) (21120,9.8e-15) (21124,3.63e-15) (21128,5.94e-15) (21132,5.73e-15) (21136,4.11e-15) (21140,6.48e-15) (21144,2.62e-15) (21148,6.43e-15) (21152,7.48e-15) (21156,3.2e-16) (21160,3.49e-15) (21164,7.64e-15) (21168,2.52e-15) (21172,1.15e-15) (21176,1.41e-15) (21180,2.06e-15) (21184,7.45e-15) (21188,3.61e-15) (21192,4.83e-15) (21196,2.06e-15) (21200,4.86e-15) (21204,1.08e-13) (21208,8.13e-15) (21212,1.21e-15) (21216,8.06e-15) (21220,1.9e-15) (21224,7.25e-15) (21228,7.22e-15) (21232,6.27e-16) (21236,7.87e-15) (21240,2.76e-15) (21244,2.75e-15) (21248,9.96e-16) (21252,4.35e-15) (21256,6.14e-15) (21260,7.11e-15) (21264,4.54e-15) (21268,4.22e-15) (21272,2.58e-15) (21276,1.69e-15) (21280,7.35e-16) (21284,1.5e-15) (21288,3.16e-15) (21292,2.37e-15) (21296,2.76e-14) (21300,3.77e-15) (21304,6.6e-15) (21308,2.88e-15) (21312,2.81e-15) (21316,1.23e-15) (21320,1.17e-15) (21324,2.78e-15) (21328,3.75e-15) (21332,4e-15) (21336,2.3e-15) (21340,1.64e-14) (21344,1.21e-15) (21348,4.28e-15) (21352,5.29e-15) (21356,2.43e-15) (21360,7.32e-15) (21364,4.82e-15) (21368,3.49e-15) (21372,6.63e-16) (21376,2.62e-15) (21380,3.73e-15) (21384,5.53e-14) (21388,6.13e-15) (21392,1.4e-14) (21396,3.96e-15) (21400,7.65e-15) (21404,9.45e-16) (21408,1.35e-14) (21412,4.3e-14) (21416,5.21e-15) (21420,2.05e-15) (21424,2.48e-15) (21428,3.21e-15) (21432,5.83e-15) (21436,5.4e-15) (21440,6.48e-15) (21444,8.55e-15) (21448,7.69e-15) (21452,2.55e-15) (21456,4.02e-15) (21460,2.34e-15) (21464,2.5e-15) (21468,5.37e-15) (21472,4.68e-15) (21476,1.22e-15) (21480,4.6e-15) (21484,4.74e-15) (21488,6.31e-15) (21492,6.67e-15) (21496,2.78e-14) (21500,6.01e-16) (21504,3.89e-15) (21508,2.63e-15) (21512,9.99e-15) (21516,8.15e-16) (21520,6.66e-16) (21524,3.52e-15) (21528,1.98e-15) (21532,4.07e-15) (21536,1.7e-15) (21540,9.99e-15) (21544,4.91e-15) (21548,8.31e-15) (21552,4.41e-15) (21556,1.43e-15) (21560,3.21e-15) (21564,4.35e-15) (21568,4.66e-15) (21572,2.41e-15) (21576,1.4e-14) (21580,2.86e-15) (21584,2.64e-15) (21588,1.35e-14) (21592,1.15e-15) (21596,5.71e-15) (21600,7.33e-16) (21604,5.1e-15) (21608,3.75e-14) (21612,1.17e-15) (21616,4.57e-15) (21620,7.84e-15) (21624,4.83e-15) (21628,1.94e-15) (21632,1.48e-15) (21636,5.76e-15) (21640,1.92e-15) (21644,5.5e-15) (21648,7.55e-15) (21652,2.68e-15) (21656,2.74e-15) (21660,1.46e-15) (21664,1.29e-15) (21668,7.56e-15) (21672,4.74e-15) (21676,4.23e-15) (21680,1.95e-15) (21684,9.48e-16) (21688,2.76e-15) (21692,8.03e-15) (21696,3.27e-15) (21700,4.3e-14) (21704,1.54e-15) (21708,4.3e-15) (21712,5.1e-15) (21716,5.06e-15) (21720,3.83e-15) (21724,7.95e-15) (21728,1.2e-15) (21732,5.26e-16) (21736,2.77e-15) (21740,1.03e-15) (21744,1.17e-15) (21748,8.32e-16) (21752,1.16e-14) (21756,1.87e-15) (21760,7.23e-15) (21764,3.83e-15) (21768,8.32e-15) (21772,1.39e-14) (21776,8.38e-15) (21780,7.74e-15) (21784,4.27e-15) (21788,2.2e-15) (21792,2.68e-15) (21796,4.58e-15) (21800,7.41e-15) (21804,3.69e-15) (21808,2.19e-15) (21812,4.19e-15) (21816,4.63e-15) (21820,3.14e-16) (21824,2.33e-15) (21828,2.62e-16) (21832,3.34e-15) (21836,2.86e-15) (21840,1.02e-15) (21844,5.99e-16) (21848,3.7e-15) (21852,1.67e-15) (21856,3.66e-16) (21860,6.93e-16) (21864,7.64e-16) (21868,5.9e-15) (21872,6.4e-16) (21876,4.75e-15) (21880,6.7e-15) (21884,7.06e-15) (21888,2.36e-15) (21892,5.7e-15) (21896,8.35e-15) (21900,7.38e-15) (21904,7.29e-16) (21908,2.48e-15) (21912,1.91e-15) (21916,2.25e-15) (21920,1.41e-15) (21924,5.39e-14) (21928,4.78e-15) (21932,1.1e-14) (21936,1.2e-14) (21940,1.92e-15) (21944,3.46e-15) (21948,3.37e-15) (21952,3.83e-15) (21956,5.27e-15) (21960,8.35e-16) (21964,5.8e-15) (21968,5.69e-16) (21972,4.85e-15) (21976,2.55e-15) (21980,9.69e-16) (21984,2.27e-15) (21988,1.15e-15) (21992,3.23e-15) (21996,9.57e-15) (22000,7.72e-16) (22004,2.32e-15) (22008,5.24e-16) (22012,8.67e-15) (22016,2.36e-15) (22020,7.79e-15) (22024,3.64e-15) (22028,4.93e-15) (22032,8.72e-16) (22036,2.17e-15) (22040,3.77e-15) (22044,3.74e-15) (22048,2.35e-15) (22052,1.04e-14) (22056,1.64e-15) (22060,2.34e-15) (22064,1.16e-14) (22068,2.95e-15) (22072,1.01e-14) (22076,3.37e-15) (22080,2.98e-15) (22084,5.55e-15) (22088,4.79e-15) (22092,2.06e-14) (22096,2.04e-15) (22100,4.53e-15) (22104,3.72e-14) (22108,5.32e-16) (22112,5.3e-15) (22116,1.1e-14) (22120,6.1e-15) (22124,5.64e-14) (22128,1.88e-15) (22132,3.29e-15) (22136,1.23e-14) (22140,1.39e-14) (22144,4.59e-15) (22148,2.58e-15) (22152,5.29e-15) (22156,9.31e-13) (22160,3.96e-15) (22164,2.24e-15) (22168,8.85e-15) (22172,1.9e-15) (22176,4.87e-15) (22180,5.8e-16) (22184,1.15e-15) (22188,4.97e-15) (22192,9.37e-15) (22196,7.62e-15) (22200,4.04e-15) (22204,1.08e-14) (22208,4.84e-15) (22212,1.39e-15) (22216,1.44e-15) (22220,5.03e-15) (22224,5.88e-15) (22228,6.75e-16) (22232,4.27e-16) (22236,3.37e-15) (22240,2.17e-15) (22244,3e-14) (22248,4.23e-15) (22252,1.15e-14) (22256,2.36e-15) (22260,5.52e-14) (22264,4.94e-16) (22268,3.33e-15) (22272,2.18e-15) (22276,7.18e-16) (22280,4.3e-15) (22284,1.35e-15) (22288,2.54e-16) (22292,2e-15) (22296,1.1e-14) (22300,9.58e-15) (22304,5.69e-16) (22308,8.52e-14) (22312,1.49e-15) (22316,7.59e-14) (22320,9.24e-15) (22324,5.1e-15) (22328,9.76e-16) (22332,1.28e-14) (22336,2.81e-15) (22340,7.37e-15) (22344,4.08e-15) (22348,9.05e-15) (22352,3.17e-16) (22356,1.17e-15) (22360,5.32e-15) (22364,9.69e-15) (22368,1.58e-15) (22372,6.5e-15) (22376,1.37e-14) (22380,2.98e-15) (22384,1.39e-15) (22388,6.58e-15) (22392,7.97e-15) (22396,7.25e-15) (22400,2.67e-15) (22404,1.37e-14) (22408,1.57e-15) (22412,5.58e-15) (22416,2.7e-15) (22420,8.29e-16) (22424,3.69e-15) (22428,4.28e-16) (22432,4.68e-15) (22436,7.89e-15) (22440,3.54e-15) (22444,2.52e-15) (22448,7.64e-15) (22452,1.35e-14) (22456,4.1e-16) (22460,3.53e-15) (22464,7.17e-16) (22468,3.14e-15) (22472,1.12e-14) (22476,1.11e-14) (22480,1.22e-14) (22484,2.4e-14) (22488,4.45e-15) (22492,5.58e-15) (22496,8.54e-15) (22500,3.64e-15) (22504,2.5e-14) (22508,1.86e-14) (22512,3.33e-15) (22516,1.45e-14) (22520,2.27e-14) (22524,9.18e-15)};
\addlegendentry{Non-Parker \acsp{rbt}};
\addplot[only marks,black,mark=triangle*] coordinates {(20480,8e-14) (22528,1.91e-14)};
\addlegendentry{Parker \acsp{rbt}};
\end{semilogyaxis} \end{tikzpicture}
	\caption{Accuracy of the \ac{rbt}-solver without iterative refinement for the various sizes of the \matname{chebspec} matrix.}
	\label{fig:exp:acc-gen-bt-struct-chebspec}
	\Description{A cloud of points between \num{1e-16} and \num{1e-13} (with a few outliers) representing the backward errors for the non-Parker \ac{rbt} tests.  The Parker \ac{rbt} tests have errors of \num{8e-14} and \num{2e-14} on the left and right sides, respectively.}
\end{figure}%
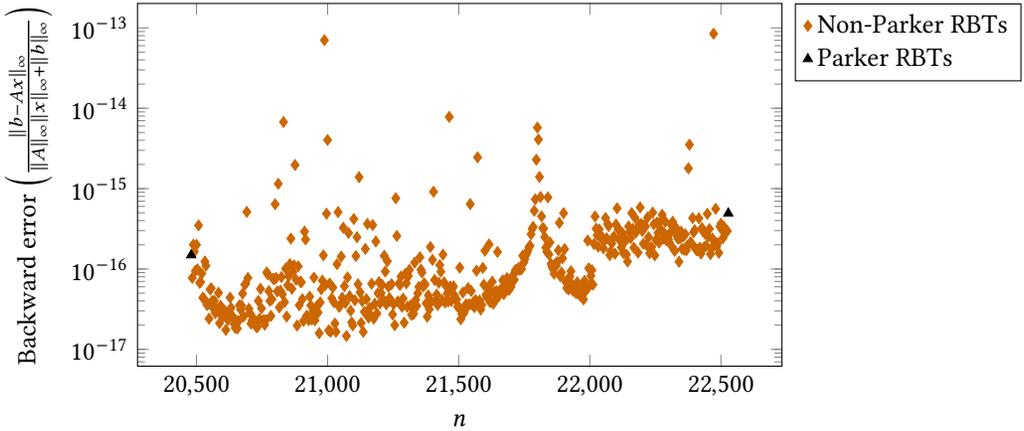
\begin{figure}
	\centering
	\begin{tikzpicture}\begin{semilogyaxis}[width=4in, height=2.5in,ylabel=Backward error \(\left(\frac{\norm{\infty}{b-Ax}}{\norm{\infty}{A}\norm{\infty}{x} + \norm{\infty}{b}}\right)\),xlabel=\(n\),xticklabel style={/pgfplots/scaled x ticks=false},legend style={at={(1.02,1)},anchor=north west},legend cell align=left,]
\addplot[only marks,orange!80!black,mark=diamond*] coordinates {(20484,7.75e-17) (20488,2.01e-16) (20492,1.64e-16) (20496,9.49e-17) (20500,2e-16) (20504,9.91e-17) (20508,3.49e-16) (20512,6.73e-17) (20516,6.88e-17) (20520,8.76e-17) (20524,4.39e-17) (20528,4.3e-17) (20532,1.24e-16) (20536,1.1e-16) (20540,3.57e-17) (20544,3.68e-17) (20548,2.41e-17) (20552,5.56e-17) (20556,5.76e-17) (20560,4.01e-17) (20564,3.25e-17) (20568,4.03e-17) (20572,2.82e-17) (20576,2.99e-17) (20580,3.18e-17) (20584,5.05e-17) (20588,2.12e-17) (20592,3.49e-17) (20596,2.62e-17) (20600,2.8e-17) (20604,2.58e-17) (20608,3.01e-17) (20612,1.75e-17) (20616,2.82e-17) (20620,2.33e-17) (20624,2.57e-17) (20628,4.41e-17) (20632,3.29e-17) (20636,2.6e-17) (20640,2.48e-17) (20644,2.07e-17) (20648,1.85e-17) (20652,2.27e-17) (20656,1.82e-17) (20660,2.24e-17) (20664,3.53e-17) (20668,3.14e-17) (20672,3.51e-17) (20676,4.94e-17) (20680,2.57e-17) (20684,6.39e-17) (20688,6.6e-17) (20692,5.14e-16) (20696,7.74e-17) (20700,3.2e-17) (20704,2.82e-17) (20708,2.09e-17) (20712,2.29e-17) (20716,2.45e-17) (20720,2.25e-17) (20724,2.25e-17) (20728,1.91e-17) (20732,1.89e-17) (20736,4.01e-17) (20740,2.4e-17) (20744,5.67e-17) (20748,3.24e-17) (20752,2.26e-17) (20756,2.3e-17) (20760,2.03e-17) (20764,2.31e-17) (20768,9.06e-17) (20772,8.58e-17) (20776,5.31e-17) (20780,4.02e-17) (20784,2.61e-17) (20788,4.2e-17) (20792,7.57e-17) (20796,7.47e-17) (20800,6.4e-16) (20804,7.75e-17) (20808,5.26e-17) (20812,1.15e-15) (20816,3.33e-17) (20820,5.37e-17) (20824,7.94e-17) (20828,4.86e-17) (20832,6.74e-15) (20836,3.87e-17) (20840,1.01e-16) (20844,2.56e-17) (20848,6.28e-17) (20852,8.13e-17) (20856,1.15e-16) (20860,2.39e-16) (20864,5.67e-17) (20868,9.17e-17) (20872,1.11e-16) (20876,1.97e-15) (20880,7.36e-17) (20884,1.98e-17) (20888,1.08e-16) (20892,3.54e-17) (20896,6.95e-17) (20900,6.84e-17) (20904,4.21e-17) (20908,2.26e-17) (20912,2.92e-16) (20916,2.33e-16) (20920,2.62e-17) (20924,2.53e-17) (20928,2.31e-17) (20932,4.53e-17) (20936,3.52e-17) (20940,2.36e-17) (20944,2.98e-17) (20948,7.81e-17) (20952,2.3e-17) (20956,2.88e-17) (20960,3.49e-17) (20964,2.88e-17) (20968,1.59e-17) (20972,3.84e-17) (20976,5.59e-17) (20980,7.18e-17) (20984,1.48e-16) (20988,7.03e-14) (20992,6.35e-17) (20996,4.88e-16) (21000,4.03e-15) (21004,1.72e-17) (21008,6.6e-17) (21012,1.66e-17) (21016,4.32e-17) (21020,5.88e-17) (21024,3.87e-17) (21028,2.1e-17) (21032,1.66e-17) (21036,1.15e-16) (21040,5.11e-16) (21044,3.39e-17) (21048,3.5e-17) (21052,1.43e-16) (21056,4.4e-17) (21060,3.26e-16) (21064,6.11e-17) (21068,3.78e-17) (21072,1.47e-17) (21076,7.87e-17) (21080,2.81e-16) (21084,4.6e-17) (21088,3.03e-17) (21092,2.17e-17) (21096,1.99e-17) (21100,4.19e-16) (21104,4.08e-17) (21108,1.45e-16) (21112,2.49e-16) (21116,6.47e-17) (21120,1.39e-15) (21124,5.13e-17) (21128,2.13e-17) (21132,3.98e-17) (21136,1.65e-17) (21140,2.93e-17) (21144,1.78e-16) (21148,4.22e-17) (21152,3.64e-16) (21156,2.49e-17) (21160,3.27e-17) (21164,2.67e-17) (21168,6.55e-17) (21172,3.53e-16) (21176,2.42e-17) (21180,3.69e-17) (21184,2.2e-16) (21188,2.8e-17) (21192,5.01e-17) (21196,6.87e-17) (21200,3.74e-17) (21204,5.01e-17) (21208,6.36e-17) (21212,4.16e-17) (21216,1.46e-16) (21220,9.31e-17) (21224,3.64e-17) (21228,1.25e-16) (21232,6.02e-17) (21236,4.35e-17) (21240,4e-17) (21244,6.48e-17) (21248,2.72e-17) (21252,3.66e-17) (21256,2e-17) (21260,7.6e-16) (21264,2.58e-16) (21268,3.95e-17) (21272,6.62e-17) (21276,6.33e-17) (21280,8.07e-17) (21284,3.39e-17) (21288,4.28e-17) (21292,2.15e-17) (21296,8.67e-17) (21300,6.63e-17) (21304,3.39e-17) (21308,8.2e-17) (21312,9.77e-17) (21316,3.52e-17) (21320,2.69e-17) (21324,3.37e-17) (21328,2.76e-17) (21332,1.2e-16) (21336,4.12e-17) (21340,3.47e-17) (21344,4.98e-17) (21348,5.98e-17) (21352,1.4e-16) (21356,9.82e-17) (21360,9.05e-17) (21364,6.09e-17) (21368,6.65e-17) (21372,6.42e-17) (21376,3.74e-17) (21380,2.55e-17) (21384,4.7e-17) (21388,3.15e-17) (21392,1.88e-16) (21396,3.94e-17) (21400,1.3e-16) (21404,9.15e-16) (21408,7.32e-17) (21412,4.12e-17) (21416,4.81e-17) (21420,3.56e-17) (21424,6.74e-17) (21428,1.12e-16) (21432,9.25e-17) (21436,4.12e-17) (21440,1.51e-16) (21444,6.84e-17) (21448,3.83e-17) (21452,6.12e-17) (21456,4.95e-17) (21460,3.95e-17) (21464,7.8e-15) (21468,6.05e-17) (21472,4.01e-17) (21476,5.71e-17) (21480,6.1e-17) (21484,4.69e-17) (21488,4.14e-17) (21492,4.18e-17) (21496,3.39e-17) (21500,3.42e-17) (21504,1.04e-16) (21508,2.37e-17) (21512,2.74e-17) (21516,5.1e-17) (21520,2.82e-17) (21524,3.09e-17) (21528,3.63e-17) (21532,3.48e-17) (21536,5.78e-17) (21540,3.64e-17) (21544,6.42e-16) (21548,5.33e-17) (21552,3.56e-17) (21556,3.85e-17) (21560,3.36e-17) (21564,4.11e-17) (21568,9.52e-17) (21572,2.45e-15) (21576,3.9e-17) (21580,4.05e-17) (21584,3.14e-17) (21588,3.56e-17) (21592,1.03e-16) (21596,6.37e-17) (21600,1.69e-16) (21604,4.78e-17) (21608,4.42e-17) (21612,5.84e-17) (21616,2.02e-16) (21620,3.86e-17) (21624,5.07e-17) (21628,6.95e-17) (21632,3.7e-17) (21636,5.16e-17) (21640,4.11e-17) (21644,5.31e-17) (21648,1.63e-16) (21652,4.57e-17) (21656,5.33e-17) (21660,5.54e-17) (21664,5.23e-17) (21668,7.22e-17) (21672,4.87e-17) (21676,6.92e-17) (21680,6.17e-17) (21684,7.7e-17) (21688,7.11e-17) (21692,5.5e-17) (21696,6.36e-17) (21700,6.01e-17) (21704,8.08e-17) (21708,7.17e-17) (21712,7.36e-17) (21716,9.67e-17) (21720,9.7e-17) (21724,9.79e-17) (21728,1.03e-16) (21732,9.82e-17) (21736,9.9e-17) (21740,9.99e-17) (21744,1.46e-16) (21748,1.2e-16) (21752,1.32e-16) (21756,1.48e-16) (21760,1.58e-16) (21764,1.7e-16) (21768,1.87e-16) (21772,1.95e-16) (21776,2.71e-16) (21780,3.16e-16) (21784,3.35e-16) (21788,5.3e-16) (21792,7.37e-16) (21796,2.29e-15) (21800,5.75e-15) (21804,4.09e-15) (21808,1.4e-15) (21812,7.87e-16) (21816,4.52e-16) (21820,3.22e-16) (21824,3.19e-16) (21828,2.5e-16) (21832,1.85e-16) (21836,1.64e-16) (21840,7.79e-16) (21844,1.52e-16) (21848,2.14e-16) (21852,1.16e-16) (21856,1.06e-16) (21860,9.94e-17) (21864,8.74e-17) (21868,1.16e-16) (21872,1.1e-16) (21876,1.22e-16) (21880,1.71e-16) (21884,3.75e-16) (21888,8.11e-17) (21892,9.06e-17) (21896,1.36e-16) (21900,4.96e-16) (21904,1.77e-16) (21908,6.93e-17) (21912,5.78e-17) (21916,5.82e-17) (21920,5.58e-17) (21924,8.11e-17) (21928,9.39e-17) (21932,6.46e-17) (21936,5.36e-17) (21940,5.39e-17) (21944,5.96e-17) (21948,4.98e-17) (21952,6.83e-17) (21956,5.29e-17) (21960,4.78e-17) (21964,5.04e-17) (21968,4.52e-17) (21972,6.01e-17) (21976,4.19e-17) (21980,7.13e-17) (21984,5.32e-17) (21988,5.78e-17) (21992,2.26e-16) (21996,1.02e-16) (22000,6.26e-17) (22004,8.22e-17) (22008,9.67e-17) (22012,6.39e-17) (22016,2.05e-16) (22020,4.51e-16) (22024,3.17e-16) (22028,2.63e-16) (22032,1.9e-16) (22036,3.11e-16) (22040,2.68e-16) (22044,2.45e-16) (22048,2.35e-16) (22052,1.81e-16) (22056,2.27e-16) (22060,4.31e-16) (22064,2.42e-16) (22068,3.63e-16) (22072,3.43e-16) (22076,1.49e-16) (22080,1.67e-16) (22084,2.08e-16) (22088,2.4e-16) (22092,2.54e-16) (22096,1.55e-16) (22100,3.58e-16) (22104,5.69e-16) (22108,3.14e-16) (22112,2.33e-16) (22116,2.98e-16) (22120,1.86e-16) (22124,2.81e-16) (22128,1.58e-16) (22132,2.48e-16) (22136,2.24e-16) (22140,1.51e-16) (22144,1.24e-16) (22148,3.39e-16) (22152,2.91e-16) (22156,2.43e-16) (22160,4.97e-16) (22164,3.16e-16) (22168,1.84e-16) (22172,4.36e-16) (22176,2.85e-16) (22180,3.09e-16) (22184,1.6e-16) (22188,3.57e-16) (22192,5.84e-16) (22196,3.64e-16) (22200,2.98e-16) (22204,2.06e-16) (22208,3.51e-16) (22212,3.36e-16) (22216,2e-16) (22220,4.37e-16) (22224,2.49e-16) (22228,1.57e-16) (22232,1.85e-16) (22236,1.86e-16) (22240,5.04e-16) (22244,3.6e-16) (22248,1.48e-16) (22252,2.3e-16) (22256,3.78e-16) (22260,2.81e-16) (22264,2.13e-16) (22268,3.88e-16) (22272,3.08e-16) (22276,4.74e-16) (22280,2.39e-16) (22284,2.8e-16) (22288,3.88e-16) (22292,2.27e-16) (22296,2.48e-16) (22300,2.32e-16) (22304,2.11e-16) (22308,2.88e-16) (22312,3.48e-16) (22316,3.88e-16) (22320,1.77e-16) (22324,1.68e-16) (22328,3.28e-16) (22332,2.58e-16) (22336,2.38e-16) (22340,1.23e-16) (22344,2.42e-16) (22348,2.91e-16) (22352,1.78e-16) (22356,2.53e-16) (22360,1.74e-16) (22364,2.71e-16) (22368,1.71e-16) (22372,1.96e-16) (22376,1.79e-15) (22380,3.52e-15) (22384,2.15e-16) (22388,2.08e-16) (22392,4.31e-16) (22396,2.86e-16) (22400,3.05e-16) (22404,1.78e-16) (22408,1.71e-16) (22412,1.74e-16) (22416,4.27e-16) (22420,1.84e-16) (22424,1.56e-16) (22428,3.24e-16) (22432,2.94e-16) (22436,1.87e-16) (22440,2.74e-16) (22444,1.94e-16) (22448,4.86e-16) (22452,2.56e-16) (22456,3.15e-16) (22460,1.53e-16) (22464,1.79e-16) (22468,2.39e-16) (22472,8.47e-14) (22476,2.17e-16) (22480,5.61e-16) (22484,1.92e-16) (22488,2.09e-16) (22492,1.59e-16) (22496,3.39e-16) (22500,3.65e-16) (22504,3.07e-16) (22508,2.43e-16) (22512,3.34e-16) (22516,2.77e-16) (22520,3.09e-16) (22524,2.96e-16)};
\addlegendentry{Non-Parker \acsp{rbt}};
\addplot[only marks,black,mark=triangle*] coordinates {(20480,1.49e-16) (22528,4.94e-16)};
\addlegendentry{Parker \acsp{rbt}};
\end{semilogyaxis} \end{tikzpicture}
	\caption{Accuracy of the \ac{rbt}-solver without iterative refinement for the various sizes of the \matname{fiedler} matrix.}
	\label{fig:exp:acc-gen-bt-struct-fiedler}
	\Description{A cloud of points between \num{1e-17} and \num{1e-15} (with some outliers) representing the backward errors for the non-Parker \ac{rbt} tests.  The band seems to show some trends, unlike the plots for \matname{circul} and \matname{chebspec}.  The Parker \ac{rbt} tests have errors of \num{1e-16} and \num{5e-16} on the left and right sides, respectively.}
\end{figure}%
These matrices are cases where the Parker \ac{rbt} can improve their accuracy, even without iterative refinement.
The first and last problems are multiples of 2048 (i.e., 4 times the tile size), giving the strict recursive structure of Parker \acp{rbt}.
Then, the generalized formula was tested with every intermediate multiple of four.
This latter group represents all of the cases where a Parker \ac{rbt} could be applied but our generalization has a different structure.
Because we are interested specifically in the factorization error, no iterative refinement is applied in this test.
So, if the generalized structure was problematically worse than Parker's formulation, the Parker cases would be noticeably better than the non-Parker cases  (particularly the right side).
However, most of the non-Parker cases are similar or better than the Parker cases.
First, for \matname{circul}, all the errors varied by less than a factor of two, demonstrating little sensitivity to the \ac{rbt} structure.
Next, for \matname{chebspec}, the errors varied significantly more without an obvious pattern.
But, \SI{94}{\%} of the non-Parker tests had a smaller error than both of the Parker tests, and only three of the non-Parker tests had a larger error than both of the Parker tests.
Finally, \matname{fiedler} is the most intriguing with visible trends in the data.
Particularly, the error initially decreases from the left Parker \ac{rbt}, spikes at around \(n=\num{21880}\), then jumps up by half a digit around \(n=\num{22016}\) (although there are numerous outliers, particularly on the left).
Note that \(n=\num{22016}\) corresponds to exactly 43 tile rows.
Interestingly, the other two tile boundaries (\(n=\num{20992}, \num{21504}\)) do not have similar jumps in the accuracy.
Fortunately, \SI{63}{\%} of the non-Parker cases had a smaller error than both of the Parker cases, and only \SI{6}{\%} were larger than both.
So, while our generalization seems to have some form of effect, it is not a significant overall reduction in stability.

\subsection{Failures of the \acs{rbt} Solver}
\label{sec:exp:orr}

\Cref{tab:acc:back-error} shows that the \ac{rbt} solver matched the accuracy of \ac{gepp} in all but three matrices: \matname{orthog}, \matname{ris}, and \matname{riemann}.
Understanding these problems is important for understanding the weaknesses of the approach.
Note that large element growth can only occur when one or more leading principal submatrices have small singular values (see \eqref{eq:growth-bound}).
The \matname{ris} matrix is the most straightforward; its entries are all close to zero except for a band along the anti-diagonal.
Thus, the \ac{rbt} needs to apply the anti-diagonal band to the first diagonal element,
which requires a depth close to \(\log_2(n)\).
This behavior was distilled to \cref{eq:example-for-small-d} as an example of how a depth less than \(\ceil{\log_2(n)}\) is insufficient for strong nonsingularity.

The \matname{orthog} matrix has a history of being challenging for \ac{gepp} alternatives~\cite{beckerReducingAmountPivoting2012,donfackSurveyRecentDevelopments2015,panNumericallySafeGaussian2017,lindquistReplacingPivotingDistributed2020,lindquistThresholdPivotingDense2022,lindquistUsingAdditiveModifications2023}.
This matrix is defined as
\[
A[i,j] =
\sqrt{\frac{2}{n+1}}\sin\Bigl(\frac{i \times j \times \pi}{n+1}\Bigr).
\]
When \(i \times j \ll n\), sine is approximately linear in its argument.
Thus, the first few leading principal submatrices are close to rank-1 matrices, so plain \ac{genp} will have a very large growth factor.
While the \acp{rbt} combine \(2^{2d}\) submatrices to form the leading principal submatrix, it turns out that these submatrices are all numerically low rank.
For the \matname{orthog} matrix in \cref{tab:acc:back-error}, the 16 submatrices that combine to form the leading \(512\times512\) principal submatrix, together have only 266 singular values larger than \(10^{-11}\) (with \(6\times 10^{-2}\) being the largest element in the original matrix).\footnote{\label{foot:submatrix-tests}See \texttt{submatrix-tests.jl} in the code artifact for more details.}
Furthermore, after applying the first orthogonal transform of the \ac{rbt} to each side, the resulting four matrices together have only 147 singular values larger than \(10^{-11}\) (if a Parker RBT is used instead, this decreases to 142 singular values).
Thus, the leading principal block has many small singular values.
This is fundamentally the same issue as in \matname{ris} or \cref{eq:example-for-small-d}, except the rank deficiency is less obvious.
%

Finally, \matname{riemann} is both the most concerning (since \ac{rbt}-preprocessing reduces the accuracy, particularly the generalized formulation) and the most enigmatic.
This matrix is related to the Riemann hypothesis and integer divisibility~\cite{roeslerRiemannHypothesisEigenvalue1986}.
%
%
%
Given the accuracy of \ac{genp} on this matrix and additional experimental results,\textsuperscript{\ref{foot:submatrix-tests}} the leading principal submatrices of \matname{riemann}  appear full rank.
However, if we apply a two-sided \ac{rbt} where the random variables are set to 1, the resulting \(256\times256\) leading principal submatrix has only 240 singular values larger than \(10^{-8}\) (with \(1.5\times 10^{5}\) being the largest element in the original matrix).
The random variables are supposed to prevent these types of cancellations, but this suggests that a larger range of random values may be useful for some matrices.
Interestingly, although iterative refinement could converge for the Parker \ac{rbt} but not the semi-Parker \ac{rbt}, using a Parker \ac{rbt} results in only 235 singular values larger than \(10^{-8}\) for the leading \(256\times 256\) block.
This suggests that there are likely additional factors limiting the accuracy of the \ac{rbt}-solvers on this matrix, possibly relating to the singular- or eigen-vectors.

\subsection{Performance}

Figure~\ref{fig:exp:perf} compares the performance of the generalized \ac{rbt} solver with that of \ac{gepp}, \ac{genp}, and the old formulation of \acp{rbt}.
The performance tests used \matname{rand}, except for the diagonally dominant cases for \ac{gepp} \ac{getp} which used \matname{rand+nI}.
Each configuration was run four times, and the runtimes were summarized with the mean and the \SI{99}{\%} \ac{ci};
the performance was then computed as a \(\tfrac{2}{3}n^3\) \si{\flop} count
divided by either the mean, the lower \ac{ci} bound, or the upper
\ac{ci} bound in seconds.
\begin{figure}
	\centering
	\input{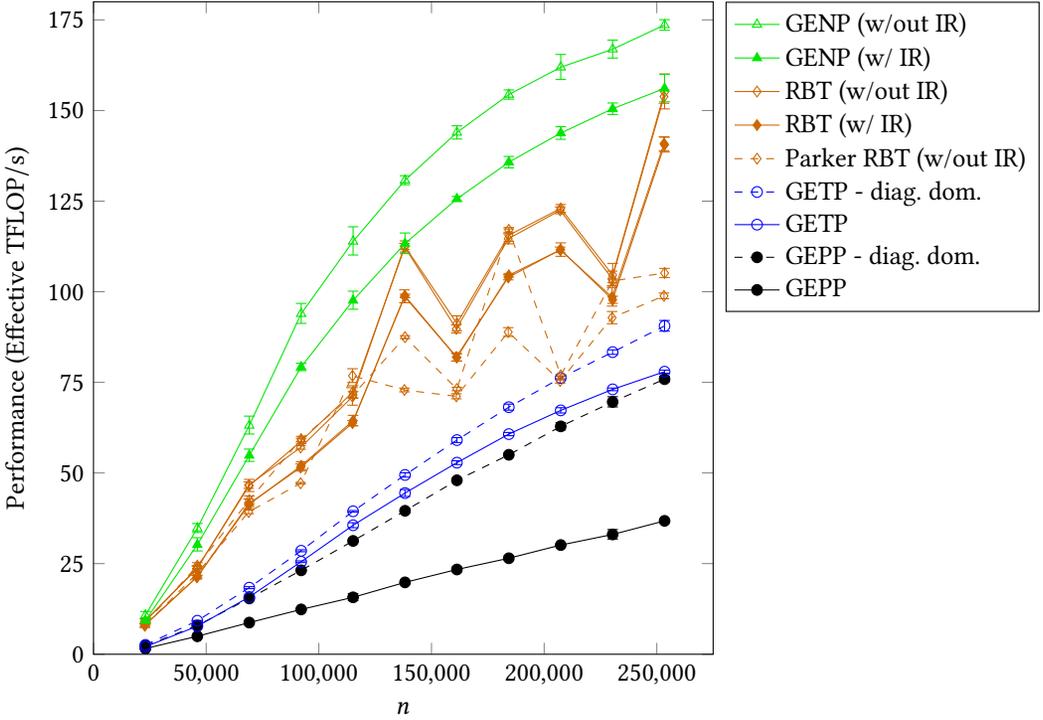}
	\caption{Performance of the \ac{rbt} solver versus \ac{gepp} and
        \ac{genp}.  The performance is computed based on
        \(\tfrac{2}{3}n^3\) \si{\flop}, to normalize the performance
        results.  Error bars indicate the \SI{99}{\%} confidence intervals based on the runtime.}
	\Description{\Ac{genp} (without IR) smoothly increases in performance from about \SI{10}{\tera\flop\per\second} up to about \SI{175}{\tera\flop\per\second} in a slightly convex curve.  \Ac{genp} (with IR) follows a similar but lower curve, reaching about \SI{155}{\tera\flop\per\second}.  \Ac{getp} and \ac{gepp} both start at around \SI{2}{\tera\flop\per\second} and smoothly increases performance as the problem size increases.  \Ac{getp} reaches \num{80} and \SI{90}{\tera\flop\per\second} for a random matrix and a diagonal dominant matrix, respectively.  \Ac{gepp} reaches \num{37} and \num{75}{\tera\flop\per\second} for those same matrices.  The generalized \ac{rbt} (without IR) has a very jagged performance curve, peaking at about the performance of \ac{genp} with IR.  The two sets of problem sizes are almost perfectly overlapped.  Adding IR to the \ac{rbt} solver reduces performance but keeps the shape of the curve.  The Parker \ac{rbt} also has a jagged performance curve, but averaging much lower and having very different curves for the two sets of problem sizes.}
	\label{fig:exp:perf}
\end{figure}
All of the solvers were tested with matrices of dimension \(\num{23040}\times i\) for \(1 \leq i \leq 11\).
Because \(\num{23040}=45\times512\), the Parker \ac{rbt} will be aligned to whole tiles only every fourth matrix size.
Additionally, the \ac{rbt} solvers were tested with matrices smaller by half of a tile, which always corresponds to cases in which the Parker \ac{rbt} solver is not aligned to whole tiles.
We did not include a set of matrices where the Parker \ac{rbt} is always
aligned since, in that case, the generalized and Parker \acp{rbt} are
the same structurally.
\Ac{genp} and the \ac{rbt} solvers were tested both with and without iterative refinement.
For clarity's sake, the results for the Parker \ac{rbt} with iterative
refinement are omitted from \cref{fig:exp:perf} but can be found in our
results' artifact.

\Ac{genp} and the \ac{rbt} solvers provided noticeably higher performance than \ac{gepp} and \ac{getp}, although the \ac{rbt} solver was about \SI{30}{\%} slower than \ac{genp}.
For the largest two problem sizes, the \ac{rbt} solver with iterative refinement was \SI{40}{\%} and \SI{85}{\%} faster than the best case of \ac{gepp}.
These speedups double when extensive pivoting is required.
Thus, even if the \ac{rbt} solver must fall back to \ac{gepp} \SI{20}{\%} of the time, it will be on average more than \SI{25}{\%} faster.

When comparing the two \ac{rbt} formulations, the generalized \ac{rbt} outperforms the Parker \ac{rbt} by up to \SI{62}{\%} and only underperforms on one problem size.
However, for problem sizes like \(n=\num{184320}\) where the Parker \ac{rbt} aligns to the tiles, the two formulations provide identical performance.
Because the generalized \ac{rbt} always aligns itself to whole tiles, it performs identically between the two sets of problem sizes.
On the other hand, the Parker \ac{rbt} saw up to a \SI{32}{\%} increase in runtime when decreasing the matrix size by 256 rows.
Despite better tile alignment, the performance of the generalized \ac{rbt} does not change smoothly with the problem size;
the variation depends on whether communication is needed for one or both of the layers in the transform.
Because this test uses a \(4\times 4\) process grid, no communication is needed if and only if \(\ceil{n_t2^{-d}}\) is divisible by \(4\).
In \cref{fig:exp:perf}, this condition occurs for \(n=\num{23040},
n=\num{138240}, n=\num{253440}\), corresponding to the large spikes
in the performance.
Similarly, only one layer of the \ac{rbt} communicates for
\(n=\num{69120}, n=\num{184320}, n=\num{207360}\), corresponding
to the middle band of performance results.
This communication overhead results in a \SI{48}{\%} increase in the \si{\giga\flop\per\second} rate, compared to only a \SI{4}{\%} increase in that metric for \ac{genp};
this corresponds to an \SI{10}{\%} \emph{decrease} in runtime when increasing the problem size by \SI{10}{\%} (and the \si{\flop} count by \SI{33}{\%}).
This suggests that it is preferable to use a butterfly structure where the top half of the first layer  (i.e., the layer with a single butterfly matrix) has dimension
\begin{equation}
	\label{eq:comm-free-rbt-size}
	\mathrm{lcm}(p, q)2^{d-1}\ceil{n_t2^{-d}\mathrm{lcm}(p, q)^{-1}}
\end{equation}
for a \(p\times q\) process grid and \(\mathrm{lcm}(p, q)\) being the least common multiple of \(p\) and \(q\).

Both the \acp{rbt} and iterative refinement added a noticeable overhead, \SI{12}{\%} and \SI{11}{\%} respectively when \(n=\num{253440}\); the former increases to \SI{60}{\%} for the communication intensive \(n=\num{230400}\).
When compared to Parker \acp{rbt}, the new generalized transforms decreased the runtime of the \emph{overall solve} by up to \SI{38}{\%};
it did introduce a slowdown in two cases (\(n=\num{46080}\) and
\(n = \num{92160}\)) but these slowdowns were less than \SI{10}{\%}.

As discussed in \cref{sec:recovering-acc}, when iterative refinement cannot recover sufficient accuracy the problem must be re-solved with a more robust solver.
If we assume the \ac{rbt}-solver fails for one in three matrices (which is worse than \cref{tab:acc:back-error}), our performance results indicate that the \ac{rbt}-solver will outperform \ac{gepp} on \emph{average}.\footnote{Details are provided in our data artifact.}
Furthermore, if the matrices are factored by \ac{gepp} in a time close to that of \matname{rand}, the \ac{rbt} solver will have an average speedup between \SI{54}{\%} and \SI{75}{\%}.
Tournament pivoting is more competitive; when both layers of the \ac{rbt} need to communicate between nodes, the \ac{rbt} solver is \SI{3}{\%} to \SI{12}{\%} slower.
But, if \ac{rbt} does not require inter-node communication, the \ac{rbt} solver is \SI{9}{\%} to \SI{20}{\%} faster.

\subsection{Strong Scaling}

In some applications, the linear systems are small relative to the number of nodes needed for the rest of the application~\cite{ghyselsHighPerformanceSparse2022}.
Thus, strong scaling is important for achieving good performance in those applications.
Towards that end, we tested the \ac{rbt} approach for problems of size \(n=\num{100000}\) with varying numbers of processes, shown in \cref{fig:exp:perf:strong-100000}.
\begin{figure}
	\begin{tikzpicture}\begin{loglogaxis}[width=3.75in, height=3.25in,ylabel=Time (\si{\second}),ytick={5, 10, 20, 40, 80, 160},ymin=5, ymax=160,error bars/y dir=both, error bars/y explicit,yticklabel style={/pgf/number format/fixed, /pgfplots/scaled y ticks=false},xlabel=Number of nodes,xtick={1, 2, 4, 8, 16},xticklabel style={/pgf/number format/fixed, /pgfplots/scaled x ticks=false},log ticks with fixed point,legend style={at={(1.02,1.00)},anchor=north west},legend cell align=left,]
\addplot[black,mark=*] coordinates {(1.0,95.223)-=(0,1.8931185095066354)+=(0,1.8931185095066354) (2.0,122.979)-=(0,11.003570628732874)+=(0,11.003570628732874) (4.0,71.37325)-=(0,4.919753773714035)+=(0,4.919753773714035) (8.0,49.730000000000004)-=(0,4.237645109822047)+=(0,4.237645109822047) (16.0,36.7345)-=(0,0.5539608008750534)+=(0,0.5539608008750534)};
\addlegendentry{\acs{gepp}};
\addplot[black,dotted,forget plot] coordinates {(1.0, 95.223) (32.0, 2.97571875)};
\addplot[black,mark=*,dashed,mark options={solid},error bars/error bar style={solid}] coordinates {(1.0,63.34925)-=(0,3.649523321455959)+=(0,3.649523321455959) (2.0,41.84425)-=(0,2.073894851781958)+=(0,2.073894851781958) (4.0,36.36975)-=(0,0.29967882672745816)+=(0,0.29967882672745816) (8.0,26.0965)-=(0,1.1697031563628464)+=(0,1.1697031563628464) (16.0,25.4255)-=(0,0.2102389305155299)+=(0,0.2102389305155299)};
\addlegendentry{\acs{gepp} - diag. dom.};
\addplot[black,dotted,forget plot] coordinates {(1.0, 63.34925) (32.0, 1.9796640625)};
\addplot[blue,mark=o] coordinates {(2.0,45.27825)-=(0,0.5714167618023183)+=(0,0.5714167618023183) (4.0,26.701999999999998)-=(0,0.3653487599749461)+=(0,0.3653487599749461) (8.0,23.18575)-=(0,0.11599696370233659)+=(0,0.11599696370233659) (16.0,22.823)-=(0,0.12182182205939895)+=(0,0.12182182205939895)};
\addlegendentry{\acs{getp}};
\addplot[blue,dotted,forget plot] coordinates {(2.0, 45.27825) (32.0, 2.829890625)};
\addplot[blue,mark=o,dashed,mark options={solid},error bars/error bar style={solid}] coordinates {(2.0,32.3485)-=(0,0.25091561064189705)+=(0,0.25091561064189705) (4.0,23.601)-=(0,0.4579374708449109)+=(0,0.4579374708449109) (8.0,21.00625)-=(0,0.38287576286428404)+=(0,0.38287576286428404) (16.0,20.29875)-=(0,0.23759862517790964)+=(0,0.23759862517790964)};
\addlegendentry{\acs{getp} - diag. dom.};
\addplot[blue,dotted,forget plot] coordinates {(2.0, 32.3485) (32.0, 2.02178125)};
\addplot[orange!80!black,mark=diamond*] coordinates {(1.0,52.98425)-=(0,1.1695399945182459)+=(0,1.1695399945182459) (2.0,28.919)-=(0,0.8030595370631595)+=(0,0.8030595370631595) (4.0,19.29175)-=(0,0.39098024054730374)+=(0,0.39098024054730374) (8.0,11.8505)-=(0,0.26692486859723985)+=(0,0.26692486859723985) (16.0,10.116)-=(0,0.22057973121238916)+=(0,0.22057973121238916)};
\addlegendentry{\acs{rbt} (w/ IR)};
\addplot[orange!80!black,dotted,forget plot] coordinates {(1.0, 52.98425) (32.0, 1.6557578125)};
\addplot[orange!80!black,mark=diamond] coordinates {(1.0,47.324)-=(0,0.5580437880463904)+=(0,0.5580437880463904) (2.0,25.47075)-=(0,0.2724057890596292)+=(0,0.2724057890596292) (4.0,17.1595)-=(0,0.2286736504811877)+=(0,0.2286736504811877) (8.0,10.57425)-=(0,0.3478683224150032)+=(0,0.3478683224150032) (16.0,9.275500000000001)-=(0,0.08898205658662661)+=(0,0.08898205658662661)};
\addlegendentry{\acs{rbt} (w/out IR)};
\addplot[orange!80!black,dotted,forget plot] coordinates {(1.0, 47.324) (32.0, 1.478875)};
\addplot[green!90!black,mark=triangle*] coordinates {(1.0,34.38975)-=(0,0.33008782391188163)+=(0,0.33008782391188163) (2.0,19.9385)-=(0,0.32598955012700515)+=(0,0.32598955012700515) (4.0,12.40025)-=(0,0.6607002888811753)+=(0,0.6607002888811753) (8.0,7.76475)-=(0,0.07366553789981811)+=(0,0.07366553789981811) (16.0,8.041)-=(0,0.07850900590549159)+=(0,0.07850900590549159)};
\addlegendentry{\acs{genp} (w/ IR)};
\addplot[green!90!black,dotted,forget plot] coordinates {(1.0, 34.38975) (32.0, 1.0746796875)};
\addplot[green!90!black, mark=triangle] coordinates {(1.0,28.21475)-=(0,0.537799298647748)+=(0,0.537799298647748) (2.0,16.572)-=(0,0.12365174178152216)+=(0,0.12365174178152216) (4.0,10.53925)-=(0,0.5322196415697285)+=(0,0.5322196415697285) (8.0,6.608750000000001)-=(0,0.10614222241211202)+=(0,0.10614222241211202) (16.0,7.2177500000000006)-=(0,0.09325278921387614)+=(0,0.09325278921387614)};
\addlegendentry{\acs{genp} (w/out IR)};
\addplot[green!90!black,dotted,forget plot] coordinates {(1.0, 28.21475) (32.0, 0.8817109375)};
\end{loglogaxis} \end{tikzpicture}
	\caption{Strong scaling of various solvers for a problem of size \(n=\num{100000}\).  The dotted lines show ideal scaling.  Error bars show \SI{99}{\%} confidence intervals.}
	\Description{Both \ac{genp} and the \ac{rbt} solver show consistent speedups going from one to eight nodes, although not at the ideal scaling line.  Going to sixteen nodes, \ac{genp} gets a slight slow down and the \ac{rbt} solver only slightly improves.  \Ac{gepp} with a random matrix loses performance going from one to two nodes, but increasing the number of nodes further improves performance at a less than perfect scaling.  (\Ac{gepp} is faster on four nodes than one node by about \(1.3\times\)).  With a diagonally dominant matrix, \ac{gepp} gets small speedups going from one to two nodes and from four to eight nodes.  \Ac{getp} gets some speedup going from two to four nodes, particularly for the non-diagonally-dominant matrix, but little improvement beyond that.}
	\label{fig:exp:perf:strong-100000}
\end{figure}
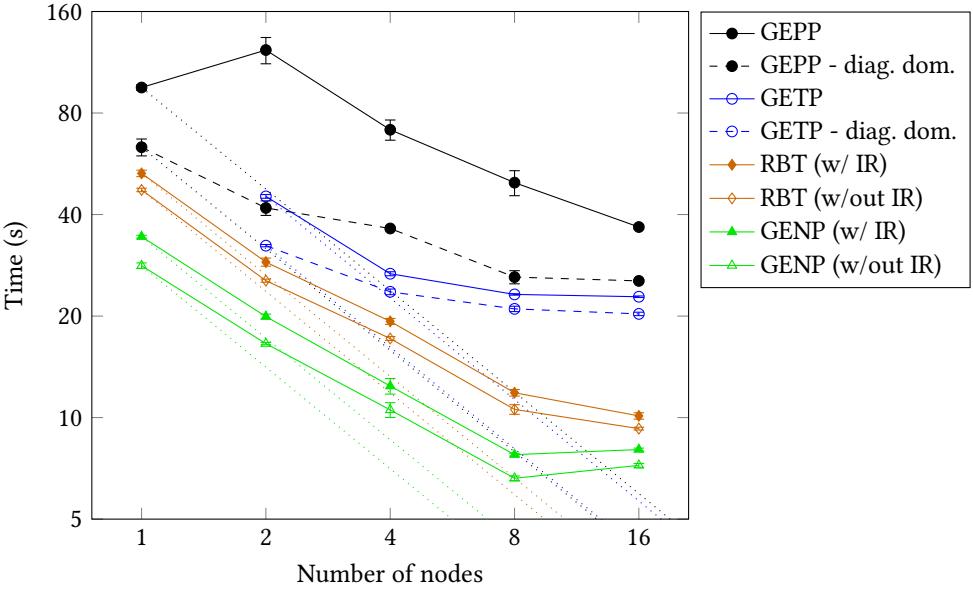
Grids were either \(p\times p\) or \(p\times 2p\), depending on the number of processes.
(Recall that we used 2 processes per node due to the low inter-socket bandwidth.)
Because \ac{slate}'s \ac{getp} allocates extra device memory for cuSOLVER to factor the local panels, it runs out of memory for the single node run.

Overall, the non-pivoted factorizations scale better than the pivoted ones, with \ac{gepp} showing a slowdown when going from one to two nodes.
Note that \ac{gepp} used \(1\times2\) and \(2\times2\) process grids for one and two nodes, respectively, so the pivot search of the latter case has an extra MPI reduction for each column.
A major factor in the worse scaling is that pivoting introduces data dependencies
that reduce the available parallelism.
For example, particularly, the block column (i.e., the panel) must be updated before the diagonal block can be applied to the block row;
on the other hand, \ac{genp} can do these updates simultaneously.
Interestingly, intuition says that increasing the number of processes should increase the number of inter-node row swaps, but \cref{fig:exp:perf:strong-100000} shows worse scaling for the diagonally dominant matrices than for the pivoted matrices, suggesting that overhead for swapping rows parallelizes well.

Interestingly, the ratio of the performance results of the \ac{rbt}
solver and \ac{genp} remains approximately constant from one to eight nodes, but going to sixteen nodes improves the performance of the former while reducing that of the latter.
This suggests that applying the \acp{rbt} scales well enough to partially offset the slowdown in the factorization.

\section{Conclusions}

We have developed a generalized version of the \acl{rbt} that can be
adapted to a matrix size and distribution, instead of needing to adapt
the matrix to the solver.
This formulation achieves the same theoretical result as Parker's original formulation
and similar experimental accuracy at a lower cost.
The key observation was that the \ac{rbt} is simply a tool to randomize the matrix
and that it should be modified to fit the application instead of modifying the application to fit the \ac{rbt}.
This contrasts with the previous work (including our
own~\cite{lindquistReplacingPivotingDistributed2020}) which took the
restriction on matrix size as a given and either ignored it without
comment or suggested users enlarge their matrix to fit the \ac{rbt}.

An important observation from \cref{sec:exp:orr} is that the matrices for which the \ac{rbt}-solver struggles to solve accurately are problems that have large regions with low rank, either originally (as in \matname{ris} and \matname{orthog}) or after applying the \ac{rbt} (as in \matname{riemann}).
This suggests that future research to improve the robustness of \ac{rbt}
preconditioning should consider adding an unstructured reordering to the matrices.
It has previously been suggested to combine \ac{rbt} preprocessing with a technique like \ac{beam} that fixes deficient diagonal elements with local information~\cite{lindquistUsingAdditiveModifications2023}.
However, the observations here would suggest that local corrections will not provide a significant benefit.
(In the case of \ac{beam}, we expect that the additive perturbation would still provide a benefit on top of the \ac{rbt}.)



There are several areas where this work can be extended.
First, as noted in \cref{sec:exp}, the advantages of our generalized formulation still assume a uniform tile size.
While we are unaware of applications that use non-uniform tile sizes with pivoted LU factorization, a further generalization could be explored where the columns and rows of the individual butterfly matrices from \cref{eq:O-matrix-definition} are permuted.
Our implementation supports such a generalization, but we have not investigated its use.
Second, our current \ac{rbt} kernel only runs on CPU.
However, some recent systems, such as the Frontier supercomputer, have the network cards attached to the GPUs; for such systems, doing the \ac{rbt} on the GPUs would reduce data movement.
Furthermore, if the \ac{rbt} size was chosen to eliminate all
inter-node communication, the overhead to transfer remote data to GPU
is removed, making a GPU-based \ac{rbt} more effective.
Thus, it would be valuable to design and test a GPU version of our generalized \ac{rbt}.

In addition to their use in solving systems of linear equations,
butterfly transforms are used in neural networks as a replacement for pointwise convolutions~\cite{alizadehvahidButterflyTransformEfficient2020}, dense layers~\cite{daoLearningFastAlgorithms2019}, and attention mechanisms~\cite{fanAdaptableButterflyAccelerator2022}.
However, the butterfly structure constrains the architecture of networks using those layers.
Furthermore, the butterfly structure is used only to provide a cheap all-to-all transform, which our generalized formulation still provides.
So, our generalized butterfly structure would allow more flexibility in the network architectures.

Finally, many other randomized linear algebra algorithms use random rectangular ``sketching'' matrices, such as those for least squares problems and low-rank approximations~\cite{martinssonRandomizedNumericalLinear2020}.
One such family of random matrices is \acfp{srft}, which randomly sample the rows of an \ac{fft} or \ac{fct} matrix after modifying the columns with random signs and permutations.
However, they can be more expensive than other sparse random matrices when the sketched dimension is large~\cite[Sec.~2.5]{murrayRandomizedNumericalLinear2023}.
However, the experimental success of a depth-2 \ac{rbt} for most matrices suggests that it sketches the matrix such that each \(k\times k\) leading principal submatrix has rank \(k\).
Furthermore, there has been basic experimental success with one-sided \acp{rbt}~\cite{baboulinUsingRandomButterfly2015}, which is even closer to the action of a sketching matrix.
Thus, it is worth investigating an \ac{srft} or \ac{rbt} sketching matrix that uses a depth smaller than \(\log(n)\) and that takes the leading rows instead of randomly subsampling.
The latter simplification could also reduce the computational cost of sketching an \(m\times n\) matrix to \(\Oh{mn}\) because half of the rows could be discarded after each step.
The theoretical analysis of such a transform may not be as strong as a traditional \acp{srft}, but the performance benefits may outweigh that.

\section*{Acknowledgments}

This research was funded in part by the National Science Foundation Office of Advanced Cyberinfrastructure under Grant No.~2004541.
This research was also supported by the Exascale Computing Project,
a collaborative effort of the U.S. Department of Energy Office of Science
and the National Nuclear Security Administration.
Finally, this research used resources of the Oak Ridge Leadership Computing Facility at the Oak Ridge National Laboratory, which is supported by the Office of Science of the U.S. Department of Energy under Contract No.~DE-AC05-00OR22725.

\bibliographystyle{ACM-Reference-Format}
\bibliography{dense-factorization}

\end{document}